\author{H. Egger}
\address{Department of Mathematics, TU Darmstadt, Germany}
\email{egger@mathematik.tu-darmstadt.de}
\title[A mixed finite element method for compressible flow]{A robust conservative mixed finite element method\\ for compressible flow on pipe networks}
\newtheorem{lemma}{Lemma}[section]
\newtheorem{problem}[lemma]{Problem}
\theoremstyle{definition}
\newtheorem{remark}[lemma]{Remark}
\newtheorem{notation}[lemma]{Notation}
\newtheorem*{example*}{Example}
\def\div{\mathrm{div}}
\def\dt{\partial_t}
\def\dtau{\bar\partial_\tau}
\def\dx{\partial_x}
\def\dxx{\partial_{xx}}
\def\RR{\mathbb{R}}
\def\E{\mathcal{E}}
\def\G{\mathcal{G}}
\def\V{\mathcal{V}}
\def\Vi{{\V_{0}}}
\def\Vb{{\V_{\partial}}}
\numberwithin{equation}{section}
\numberwithin{table}{section}
\numberwithin{figure}{section}
\begin{document}

\begin{abstract} 
We consider the numerical approximation of compressible flow in a pipe network. Appropriate coupling conditions are formulated that allow us to derive a variational characterization of solutions and to prove global balance laws for the conservation of mass and energy on the whole network. This variational principle, which is the basis of our further investigations, is amenable to a conforming Galerkin approximation by mixed finite elements. The resulting semi-discrete problems are well-posed and automatically inherit the global conservation laws for mass and energy from the continuous level. We also consider the subsequent discretization in time by a problem adapted implicit time stepping scheme which leads to conservation of mass and a slight dissipation of energy of the full discretization. The well-posedness of the fully discrete scheme is established and a fixed-point iteration is proposed for the solution of the nonlinear systems arising in every single time step. Some computational results are presented for illustration of our theoretical findings and for demonstration of the robustness and accuracy of the new method.
\end{abstract}

\maketitle

\vspace*{-1em}

\begin{quote}
\noindent 
{\small {\bf Keywords:} 
compressible flow, 
variational methods,
energy estimates,
Galerkin approximation,
mixed finite elements,
implicit time discretization
}
\end{quote}

\begin{quote}
\noindent
{\small {\bf AMS-classification (2000):}
35D30,35R02,37L65,76M10,76N99}
\end{quote}

\section{Introduction} \label{sec:intro}

This paper addresses the numerical approximation of flow problems governing the propagation of a compressible fluid in a pipeline network.
On every single pipe $e$, the conservation of mass and the balance of momentum 
shall be modeled by 
\begin{align*}
\dt \rho_e + \dx m_e &= 0,\\
\dt m_e + \dx \Big(\frac{m_e^2}{\rho_e} + p_e\Big) &= a_e \rho_e \dx( \frac{1}{\rho_e^2} \dx m_e) - b_e \frac{|m_e| m_e }{\rho_e}. 
\end{align*}
Here $\rho_e$ is the density, $m_e=\rho_e u_e$ is the mass flux, and $u_e$ is the velocity of the flow. The parameters $a_e,b_e$ are assumed to be constant and non-negative.
As equation of state, relating the density $\rho_e$ to the pressure $p_e$, we utilize 
\begin{align*} 
p_e(\rho) = c_e \rho^\gamma, \qquad \gamma>1, \ c_e > 0,
\end{align*}
and we use a corresponding potential energy density of the form \cite{NovotnyStraskraba04}
\begin{align*}
P_e(\rho) = \rho \int_0^\rho p_e(s) / s^2 ds = \frac{c_e}{\gamma-1} \rho^\gamma. 
\end{align*}
As indicated below, more general situations including the isothermal flow of real gas scan be considered as well.
The above equations describe the conservation of mass and energy in isentropic flow of a compressible fluid within the pipe $e$ with possible dissipation of energy through viscous forces and friction at the pipe walls. 
The particular form of the viscous term will become clear from our analysis.
Note that for constant density $\rho = \bar \rho$ we have
\begin{align*}
\bar \rho \dx \left(\frac{1}{\bar \rho^2} \dx m\right) = \dxx u.
\end{align*}
The viscous term in the above equations thus is in principle of the form as the one usually employed in the compressible Navier-Stokes equations \cite{Feireisl03,Lions98,NovotnyStraskraba04}.
In order to correctly describe the conservation of mass and energy at junctions $v$ of several pipes $e \in \E(v)$ of the network, we require the following coupling conditions to hold true at the junctions:
\begin{align*}
\sum_{e \in \E(v)} m_e(v) n_e(v) &= 0,\\
\frac{m_e(v)^2}{2 \rho_e(v)^2}+P'_e(\rho_e(v))  &= h_v +  \frac{a_e}{\rho_e^2}\dx m_e(v), \qquad \forall e \in \E(v).
\end{align*}
Note that the value of $h_v$ is assumed to be independent of $e$. 
For the inviscid case $a_e=0$, the second equation describes the continuity of the  \emph{specific stagnation enthalpy} \cite{MorinReigstad15,Reigstad15}; 
here we additionally take into account the viscous forces.
The two coupling conditions imply that the fluxes of mass and energy through a junction $v$ sum up to zero and therefore no mass or energy is generated or annihilated. This will become clear from our analysis below.

\medskip 

There has been intensive discussion about the appropriate coupling conditions for compressible flow in pipe networks. 
The first condition stated above is equivalent to conservation 
of mass at the junction and out of doubt; see e.g. \cite{BrouwerGasserHerty11,Garavello10,Osiadacz84}.
In contrast to that, various different relations have been considered as a second coupling condition: 
continuity of the pressure $p$ is used
frequently in the modeling and simulation of pipeline networks \cite{BandaHertyKlar06a,GugatEtAl12,Osiadacz84}. 
As shown in \cite{ColomboGaravello08,Reigstad15}, this condition may lead to 
unphysical solutions for junctions of more than two pipes. 
Also the continuity of the dynamic pressure $m^2/\rho+p(\rho)$ proposed in
\cite{ColomboGaravello06,ColomboGaravello08,ColomboHertySachers08} may in general lead to unphysical solutions; see \cite{MorinReigstad15,Reigstad15}.
The continuity of the stagnation enthalpy on the other hand leads to entropic solutions for all subsonic flow conditions for junctions connecting several pipes of arbitrary cross-sectional area \cite{Reigstad15}.
Our second coupling condition in addition accounts for the viscous forces. In the course of the manuscript, we will provide a derivation of the two coupling conditions based only on the rationale that the mass and energy should be conserved at the junctions.

\medskip

A vast amount of literature is devoted to numerical methods for compressible flow in a single pipe. Most of the schemes that have been proven to be globally convergent to weak solutions of the compressible Navier-Stokes equations are formulated in Lagrangian coordinates \cite{ZarnowskiHoff91,ZhaoHoff94,ZhaoHoff97}. This makes their generalization to networks practically infeasible. 
Finite volume methods on the other hand are typically formulated in a Eulerian framework and many partial results on their stability and convergence are available; see \cite{LeVeque02} and the references given there. The correct handling of coupling conditions in the network context, however, seems not straight forward; see \cite{BressanEtAl15} for analytical reasons. 
A globally convergent non-conforming finite element for the compressible Navier-Stokes equations has been proposed and analyzed recently \cite{GallouetEtAl16,Karper13,Karper14}. 
Due to the somewhat unusual form of the coupling conditions, a direct generalization of this method to pipe networks again seems not feasible. 

\medskip 

In this paper, we therefore propose an alternative strategy for the numerical approximation of one-dimensional compressible flow that naturally generalizes to networks and that allows to establish the conservation of mass and energy in a rather direct manner. 
The two coupling conditions stated above naturally arise in the derivation of a special variational characterization of solutions for the compressible flow problem given below. 
This variational principle encodes the conservation of mass and energy much more directly as previous formulations and in addition turns out to be amenable to a conforming Galerkin approximation in space. As a particular discretization, we consider in detail a mixed finite element method for which we prove conservation of mass and energy independently of the topology of the network and of the mesh size.
In addition, we also investigate the discretization in time by a problem adapted implicit time stepping scheme that allows to establish global balance relations for mass and energy also for the fully discrete scheme.
The final method provides exact conservation of mass and a slight dissipation of 
energy due to numerical dissipation caused by the implicit time stepping strategy. 
We establish the well-posedness of the fully discrete scheme and consider a problem adapted fixed-point iteration for the numerical solution of the nonlinear systems arising in every time step. 

\medskip

The numerical approximations obtained with our method automatically satisfy uniform energy bounds. This is the first step and main ingredient for the proof of global existence of weak solutions to the compressible Navier-Stokes equations \cite{Feireisl03,Lions98,NovotnyStraskraba04} and also for their systematic numerical approximation \cite{GallouetEtAl16,Karper13,Karper14}. 
Our method is comparably simple and directly inherits the basic conservation principle for mass and energy from the continuous problem. Moreover, our approach naturally extends to pipeline networks. 
We strongly belief that with similar arguments as in \cite{Karper14}, it might be possible to obtain a complete convergence analysis also for the method proposed in this paper.
This is however left as a topic for future research.

\medskip 

The remainder of the manuscript is organized as follows: 
In Section~\ref{sec:prelim}, we introduce our notation and provide a complete definition 
of the compressible flow problem to be considered. 
Section~\ref{sec:variational} is then devoted to the derivation of the variational principle, which is the basis for the design of our numerical method. 
In Section~\ref{sec:galerkin}, we discuss the discretization of the weak formulation in space by a conforming Galerkin approach using mixed finite elements, and we establish conservation of mass and energy for the resulting semi-discretization.
Section~\ref{sec:time} is then devoted to the subsequent discretization in time.
In Section~\ref{sec:implementation}, we discuss some details of the implementation of the fully discrete scheme. 
For illustration of our theoretical results, we also present some preliminary computational results for standard test problems in Section~\ref{sec:numerics}. 
We close with a short summary and a discussion of topics for future research.

\section{Notation and problem statement} \label{sec:prelim}

Let us briefly summarize our main notation used in the rest of the paper 
and then provide a complete definition of the flow problem under consideration.

\subsection{Topology and geometry}

The topology of the pipe network will be represented by a finite, directed, and connected graph $\G=(\V,\E)$ with vertices $\V=\{v_1,\ldots,v_n\}$ and edges $\E=\{e_1,\ldots,e_m\} \subset \V \times \V$. 
For any edge $e=(v_1,v_2)$, we denote by $\V(e)=\{v_1,v_2\}$ the set of vertices of the edge $e=(v_1,v_2)$, and we denote by $\E(v)=\{ e=(v,\cdot) \text{ or } e=(\cdot,v)\}$ the set of edges incident on $v$. 
The set of vertices can be split into inner vertices $\Vi=\{v : |\E(v)| \ge 2\}$ and boundary vertices $\Vb = \V \setminus \Vi$. 
For any edge $e=(v_1,v_2)$, we further define two numbers
\begin{align*}
n_e(v_1)=-1 \qquad \text{and} \qquad n_e(v_2)=1,
\end{align*}
marking the in- and outflow vertex and thus defining the orientation of the edge.
The matrix $N \in \RR^{n \times m}$ given by $N_{ij} = n_{e_j}(v_i)$ is usually called the \emph{incidence matrix} of the graph \cite{Berge}. 
For illustration of the above notions by a particular example, see Figure~\ref{fig:graph}.
\begin{figure}[ht!]
\begin{minipage}[c]{.3\textwidth}
\hspace*{-2.5em}
\begin{tikzpicture}[scale=.6]
\node[circle,draw,inner sep=2pt] (v1) at (0,2) {$v_1$};
\node[circle,draw,inner sep=2pt] (v2) at (4,2) {$v_2$};
\node[circle,draw,inner sep=2pt] (v3) at (8,4) {$v_3$};
\node[circle,draw,inner sep=2pt] (v4) at (8,0) {$v_4$};
\draw[->,thick,line width=1.5pt] (v1) -- node[above] {$e_1$} ++(v2);
\draw[->,thick,line width=1.5pt] (v2) -- node[above,sloped] {$e_2$} ++(v3);
\draw[->,thick,line width=1.5pt] (v2) -- node[above,sloped] {$e_3$} ++(v4);
\end{tikzpicture}
\end{minipage}
\caption{\label{fig:graph}Graph $\G=(\V,\E)$ with vertices $\V=\{v_1,v_2,v_3,v_4\}$ and edges $\E=\{e_1,e_2,e_3\}$
defined by $e_1=(v_1,v_2)$, $e_2=(v_2,v_3)$, and $e_3=(v_2,v_4)$. 
Here $\Vi=\{v_2\}$, $\Vb=\{v_1,v_3,v_4\}$, and $\E(v_2)=\{e_1,e_2,e_3\}$, 
and the non-zero entries of the incidence matrix are $n_{e_1}(v_1)=n_{e_2}(v_2)=n_{e_3}(v_2)=-1$ and $n_{e_1}(v_2)=n_{e_2}(v_3)=n_{e_3}(v_4)=1$.} 
\end{figure}
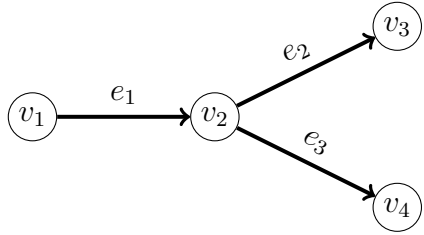

To each edge $e \in \E$, we further associate a parameter $\ell_e>0$ representing the length of the corresponding pipe.
Throughout the presentation, we tacitly identify the interval $[0,\ell_e]$ with the edge $e$ which it corresponds to. 
The values $\ell_e$ are stored in a vector $\ell=(\ell_e)_{e \in \E}$.
The triple $\G=(\V,\E,\ell)$ is called a \emph{geometric graph} \cite{Mugnolo14} 
and serves as the basic geometric model for the pipe network for the rest of the manuscript.

\subsection{Function spaces}
We denote by
\begin{align*}
L^2 = L^2(\E) = \{ u : u|_e=u_e \in L^2(e) \quad \forall e \in \E\}
\end{align*}
the space of square integrable functions defined over the network. 
Here and below, we tacitly identify $L^2(e)$ with $L^2(0,l_e)$ and with slight abuse of notation, we write
\begin{align*}
(u_e,v_e)_{e}=\int_e u_e v_e dx = \int_{v_1}^{v_2} u_e v_e dx = \int_0^{l_e} u_e v_e dx. 
\end{align*}
The scalar product of $L^2(\E)$ is then defined by
\begin{align*}
(u,v)_\E = \sum\nolimits_{e \in \E} (u_e,v_e)_{e}. 
\end{align*}
We further denote by 
\begin{align*}
H^s(\E) = \{ u \in L^2(e): u_e \in H^s(e) \quad \forall e \in \E\} 
\end{align*}
the broken Sobolev spaces of piecewise smooth functions. 
The broken derivative of a piecewise smooth function $u \in H^1(\E)$ is denoted by $\dx' u$ and given by 
\begin{align*}
(\dx' u)|_e = \dx (u|_e) \qquad \text{for all } e \in \E. 
\end{align*}
This allows us to equivalently define $H^1(\E) = \{v \in L^2(\E) : \dx' v \in L^2(\E)\}$.
Note that for any parameter $s>1/2$, functions in $H^s(\E)$ are continuous along every edge $e$, 
but they may in general be discontinuous across inner vertices $v \in \Vi$.
We will further make use of the space
\begin{align*}
H_0(\div) &:= \{ m \in H^1(\E) : \sum\nolimits_{e \in \E(v)} m_e(v) n_e(v) = 0\quad \forall v \in \V \}
\end{align*}
of \emph{conservative flux functions} which vanish at the boundary of the network.
Let us finally note that for all $p,m \in H^1(\E)$ we have the following integration-by-parts formula
\begin{align*} 
(\dx' p,m)_\E 
= -(p, \dx' m)_\E + \sum\nolimits_{v \in \V} \sum\nolimits_{e \in \E(v)} p_e(v) m_e(v) n_e(v).
\end{align*}
This identity follows directly from the definition of $(\dx' p,m)_\E =\sum_{e \in \E} (\dx p_e, m_e)_e$, integration-by-parts on every edge, and summation over all elements. The interface terms drop out if, in addition, $p$ is continuous across junctions $v$ and if $m \in H_0(\div)$; this will be utilized below.
\subsection{Problem description}

Let $\G=(\V,\E,\ell)$ be a geometric graph as introduced in the previous section.
The problem under investigation then consists of the following differential and algebraic equations. 
We look for functions $\rho$, $m$ of time and space, such that 
\begin{align} 
\dt \rho_e + \dx m_e &= 0, && \forall e \in \E  \label{eq:sys1}\\
\dt m_e + \dx \left(\frac{m_e^2}{\rho_e} + p_e\right) &= a_e \rho_e \dx\left( \frac{1}{\rho_e^2} \dx m_e\right) - b_e \frac{|m_e| m_e }{\rho_e}, && \forall e \in \E. \label{eq:sys2}
\end{align}
Recall that $f_e = f|_e$ denotes the restriction of a function $f$ to the edge $e$.
The equations here and below are tacitly assumed to hold for all $t > 0$.
We further assume that the pressure and potential energy density are related to the density of the fluid by 
\begin{align} \label{eq:sys3}
p_e(\rho) = c_e \rho^\gamma 
\qquad \text{and} \qquad 
P_e(\rho) = \frac{c_e}{\gamma-1} \rho^\gamma, \qquad \gamma>1.
\end{align}
More general equations of state can be handled without much difficulty. In our analysis, we will actually only make use of the basic identity \cite{NovotnyStraskraba04}
\begin{align*}
\rho P_e'(\rho) - P_e(\rho) = p_e(\rho).
\end{align*}
This allows to apply our basic arguments directly also to the isothermal flow of real gases.
The parameters $a_e,b_e,c_e$ are assumed to be non-negative and constant on every pipe.The value of $c_e>0$ may be different on every edge $e$, which allows to deal with pipes of different cross-section.
In addition to the conservation laws on the individual pipes, we assume that the following coupling conditions hold at every internal vertex $v \in \Vi$ of the network:
\begin{align}
\sum\nolimits_{e \in \E(v)} m_e(v) n_e(v) &= 0, && \forall v \in \Vi,  \label{eq:sys4}\\
\frac{m_e(v)^2}{2 \rho_e(v)^2}+P_e'(\rho_e(v))  &= h_v + \frac{a_e}{\rho_e^2} \dx m_e(v), && \forall v \in \Vi, \ e \in \E(v). \label{eq:sys5}
\end{align}
To complete the system description, additional conditions have to be imposed at the boundary vertices. For ease of presentation, we assume here that the network is closed such that no mass can enter or leave the network via the boundary, i.e.,
\begin{align} 
m_e(v) n_e(v) &= 0, \qquad \forall v \in \Vb. \label{eq:sys6}
\end{align}
Again, more general boundary conditions could be considered without much difficulty.
To uniquely determine the solution, we finally assume access to the initial values
\begin{align}
\rho(0)=\rho_0 \qquad \text{and} \qquad m(0)=m_0 \label{eq:sys7}
\end{align}
for the density and the mass flux. 
These will only play a minor role in our considerations. 
To ensure that all equations are well-defined, we have to require that 
the functions $\rho$, $m$ have certain smoothness properties and that $\rho > 0$ during the evolution. 
\begin{notation} \label{not:smooth}
By {\em smooth solution} of \eqref{eq:sys1}--\eqref{eq:sys6}, we understand a pair of functions 
\begin{align*}
(\rho,m) \in C^1([0,T];L^\infty(\E) \times L^2(\E)) \cap C([0,T];H_+^1(\E) \times H^2(\E)) 
\end{align*}
satisfying the equations for all $t>0$ and a.e. on $\E$.
Here $H^1_+(\E) = \{\rho \in H^1(\E) : \exists \underline \rho>0 : \rho \ge \underline \rho \mbox{ a.e. on } \E \}$ shall denote the space of uniformly positive piecewise smooth functions. 
\end{notation}

\section{A variational principle} \label{sec:variational}

We now present a variational characterization for solutions of \eqref{eq:sys1}--\eqref{eq:sys6} which is directly related
to the conservation of mass and energy. This weak formulation will be at the center of our considerations and later on serve as the starting point for the numerical approximation. 

\subsection{A variational characterization}

The first equation in the variational principle below  directly follows from the continuity equation. To motivate the somewhat unusual form of the second equation, let us consider the change of kinetic energy, given by
\begin{align*}
\frac{d}{dt} \Big(\frac{m^2}{2\rho} \Big) 
&= \Big(\frac{m}{\rho}\Big) \dt m - \Big(\frac{m^2}{2\rho^2}\Big) \dt \rho 
 = \Big(\frac{1}{\rho} \dt m - \frac{m}{2\rho^2} \dt \rho \Big) \ m.  
\end{align*}
The time derivative of the local kinetic energy is thus obtained by a weighted linear combination of the time derivatives arising in \eqref{eq:sys1} and \eqref{eq:sys2}.
With this in mind, we arrive at the following variational characterization of solutions to the compressible flow problem.

\begin{lemma}[Variational principle] \label{lem:variational} $ $\\
Let $(\rho,m)$ be a smooth solution  of \eqref{eq:sys1}--\eqref{eq:sys6} in the sense of Notation~\ref{not:smooth}. 
Then 
\begin{align*}
\left(\dt \rho,q\right)_\E &= - \left(\dx' m,q\right)_\E,  \\
\left(\frac{1}{\rho} \dt m - \frac{m}{2\rho^2} \dt \rho,v\right)_{\!\!\E}
  &= \left(\frac{m^2}{2\rho^2} + P'(\rho) - \frac{a}{\rho^2}\dx' m, \dx' v\right)_{\!\!\E} 
 \! - \! \left(\frac{m}{2\rho^2} \dx' m + b \frac{|m| m}{\rho^2},v\right)_{\!\!\E},
\end{align*}
for all $q \in L^2$, all $v \in H_0(\div)$, and all $t>0$. Note that the functions $\rho$ and $m$ here depend on $t$, while the test functions $q$ and $v$ are independent of time.
\end{lemma}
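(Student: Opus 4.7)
The first identity is immediate: multiply the continuity equation \eqref{eq:sys1} by a test function $q_e \in L^2(e)$, integrate over $e$, and sum over all edges. The entire content of the lemma is the second identity, and I would approach it by following the motivational computation given just before the statement, which identifies $\frac{1}{\rho}\dt m - \frac{m}{2\rho^2}\dt\rho$ as the correct linear combination of the two conservation laws. The plan is to derive a pointwise spatial identity for this combination on each edge, and then test against $v \in H_0(\div)$ with a single integration-by-parts on the network whose boundary contribution is killed exactly by the coupling conditions \eqref{eq:sys4}--\eqref{eq:sys5}.

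Pointwise on each edge, I would substitute \eqref{eq:sys2} into the first term and \eqref{eq:sys1} into the second and then invoke the two elementary algebraic identities
\begin{align*}
\frac{1}{\rho}\dx\Big(\frac{m^2}{\rho}\Big) = \dx\!\Big(\frac{m^2}{2\rho^2}\Big) + \frac{m\,\dx m}{\rho^2}, \qquad \frac{1}{\rho}\dx p(\rho) = \dx P'(\rho),
\end{align*}
the second being a direct consequence of the relation $\rho P'(\rho) - P(\rho) = p(\rho)$ cited in \eqref{eq:sys3}. A short computation (the cross-terms $\pm m\,\dx m/\rho^2$ partially cancel against the $\frac{m}{2\rho^2}\dt\rho$ contribution) then yields, pointwise on every edge,
\begin{align*}
\frac{1}{\rho}\dt m - \frac{m}{2\rho^2}\dt\rho = -\dx\!\Big[\frac{m^2}{2\rho^2} + P'(\rho) - \frac{a}{\rho^2}\dx m\Big] - \frac{m\,\dx m}{2\rho^2} - \frac{b\,|m|\,m}{\rho^2}.
\end{align*}

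I would then test against $v \in H_0(\div)$, sum over edges, and apply the broken integration-by-parts formula from Section~\ref{sec:prelim} to the bracketed term. The only thing left to check is that the resulting interface contribution $\sum_{v \in \V}\sum_{e \in \E(v)} \Phi_e(v)\,v_e(v)\,n_e(v)$, with $\Phi_e := \frac{m_e^2}{2\rho_e^2} + P_e'(\rho_e) - \frac{a_e}{\rho_e^2}\dx m_e$, vanishes. At inner vertices $v \in \Vi$, the coupling condition \eqref{eq:sys5} is precisely the statement that $\Phi_e(v) = h_v$ is independent of $e \in \E(v)$, so this portion collapses to $h_v \sum_{e \in \E(v)} v_e(v)n_e(v) = 0$ by definition of $H_0(\div)$. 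At boundary vertices $v \in \Vb$ the sum has a single term and $v \in H_0(\div)$ forces $v_e(v)n_e(v)=0$ directly.

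The step I regard as the heart of the argument — and the only nonroutine one — is recognizing that the spatial flux produced by combining \eqref{eq:sys1} and \eqref{eq:sys2} in this particular way is exactly the quantity that the coupling condition \eqref{eq:sys5} renders single-valued at each junction. This is what closes the integration-by-parts on the network and, conversely, explains why \eqref{eq:sys5} is the correct coupling condition to pair with the variational principle: it is forced on us by the requirement that $v \in H_0(\div)$ suffice as test space.
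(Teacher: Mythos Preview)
Your proof is correct and follows essentially the same route as the paper: derive the pointwise identity on each edge by dividing \eqref{eq:sys2} by $\rho$, substituting \eqref{eq:sys1}, and rewriting the convective and pressure terms via the two product-rule identities you state, then integrate by parts and use \eqref{eq:sys5} together with $v\in H_0(\div)$ to eliminate the interface contributions. If anything you are slightly more explicit than the paper --- your identity $\tfrac{1}{\rho}\dx(m^2/\rho)=\dx(m^2/2\rho^2)+m\,\dx m/\rho^2$ carries the correct factor $1/2$ (the paper's displayed intermediate step omits it, though the final pointwise identity is right), and you spell out separately why the vertex terms vanish at inner versus boundary vertices.
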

\begin{proof}
The first equation follows by multiplying \eqref{eq:sys1} with test functions  $q_e \in L^2(e)$, integration over $e$, and summation over all edges. 
Using \eqref{eq:sys1}, one can see that 
\begin{align*}
-\frac{m_e}{2\rho_e^2} \dt \rho_e = \frac{m_e}{2\rho_e^2} \dx m_e. 
\end{align*}
From equation \eqref{eq:sys2}, we then obtain 
\begin{align*}
\frac{1}{\rho_e} \dt m_e 
&= -\frac{1}{\rho_e}\dx \left(\frac{m_e^2}{\rho_e} + p_e(\rho_e)\right) + a_e \dx \left( \frac{1}{\rho_e^2} \dx m_e\right) - b_e \frac{|m_e| m_e}{\rho_e^2}. 
\end{align*}
The first term on the right hand side of the previous equation can be replaced by 
\begin{align*}
\frac{1}{\rho_e} \dx \left(\frac{m_e^2}{\rho_e}\right) 
&= \dx \left(\frac{m_e^2}{\rho_e^2}\right) + \frac{m_e}{\rho_e^2} \dx m_e. 
\end{align*}
Using the constitutive equation \eqref{eq:sys3}, we obtain for the second term 
\begin{align*}
\frac{1}{\rho_e} \dx p_e(\rho_e) 
&= \dx \left(\frac{p_e(\rho_e)}{\rho_e}\right) + \frac{p_e(\rho_e)}{\rho_e^2} \dx \rho_e 
\\&
= \dx P_e'(\rho_e) - \dx \left(\frac{P_e(\rho_e)}{\rho_e}\right) + \frac{p_e(\rho_e)}{\rho_e^2} \dx \rho_e 
 = \dx P_e'(\rho_e). 
\end{align*}
The last identity follows from the relation between pressure and potential energy density. A combination of the four identities stated above directly leads to 
\begin{align*}
\frac{1}{\rho_e} \dt m_e - \frac{m_e}{2\rho_e^2} \dt \rho_e = -\dx \left(\frac{m_e^2}{2\rho_e^2} + P_e'(\rho_e) - \frac{a_e}{\rho_e^2}\dx m_e\right) - \frac{m_e}{2\rho_e^2} \dx m_e - b_e \frac{|m_e| m_e}{\rho_e^2}. 
\end{align*}
Next we multiply this equation by a test function $v_e$ on every edge $e$, 
integrate over $e$, and then sum up over all edges $e$. 
In compact notation, the resulting identity reads
\begin{align*} 
\left(\frac{1}{\rho} \dt m - \frac{m}{2\rho^2} \dt \rho,v\right)_{\!\!\E} 
= -\left(\dx' \left( \frac{m^2}{2\rho^2} + P'(\rho) - \frac{a}{\rho^2} \dx' m \right), v\right)_{\!\!\E}  
 \! - \! \left(\frac{m}{2\rho^2} \dx' m + b \frac{|m| m}{\rho^2},v\right)_{\!\!\E}.
\end{align*}
We can now use integration-by-parts for the first term on the right hand side. 
Using the continuity condition \eqref{eq:sys5} and assuming $v \in H_0(\div)$, all interface terms in the integration-by-parts formula vanish, and we arrive at the second identity of the variational principle.
\end{proof}

\subsection{Conservation of mass and energy}

As an immediate consequence of the variational characterization of smooth solutions, we 
obtain the following global balance relations.
\begin{lemma}[Balance of mass and energy] \label{lem:identities} $ $\\
Let $(\rho,m)$ be a smooth solution of \eqref{eq:sys1}--\eqref{eq:sys5} in the sense of Notation~\ref{not:smooth}. 
Then 
\begin{align*}
\frac{d}{dt} \int_\E \rho dx = 0 
\end{align*}
and 
\begin{align*}
\frac{d}{dt} \int_\E \frac{m^2}{2\rho} + P(\rho) dx +  \int_\E  \frac{a}{\rho^2} |\dx' m|^2 + b \frac{|m|^3}{\rho^2} dx=0, 
\end{align*}
i.e., the total mass is conserved and the total energy is dissipated at any point in time.
\end{lemma}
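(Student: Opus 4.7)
The plan is to derive both identities by testing the variational principle of Lemma~\ref{lem:variational} with carefully chosen test functions and then exploiting the pointwise calculus relation $\tfrac{d}{dt}(m^2/2\rho) = \tfrac{1}{\rho}\dt m - \tfrac{m}{2\rho^2}\dt\rho$ that motivated the formulation in the first place.

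For mass conservation, I would simply take $q=1$ in the first equation of the variational principle. The left-hand side becomes $\frac{d}{dt}\int_\E \rho\,dx$, while on the right-hand side I rewrite
\begin{align*}
(\dx' m,1)_\E = \sum_{e\in\E}\int_e \dx m_e\,dx = \sum_{e\in\E}\sum_{v\in\V(e)} m_e(v) n_e(v) = \sum_{v\in\V}\sum_{e\in\E(v)} m_e(v)n_e(v),
\end{align*}
which vanishes vertex-by-vertex thanks to the coupling condition \eqref{eq:sys4} at interior vertices and the boundary condition \eqref{eq:sys6} at boundary vertices.

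For the energy identity, the key observation is that $m(t)\in H_0(\div)$ by \eqref{eq:sys4} and \eqref{eq:sys6}, so $v=m$ is an admissible test function in the second variational equation. The left-hand side then integrates pointwise to $\frac{d}{dt}\int_\E \frac{m^2}{2\rho}\,dx$ by the quotient rule recalled above. On the right-hand side, the terms $(\frac{m^2}{2\rho^2},\dx' m)_\E$ and $(\frac{m}{2\rho^2}\dx' m,m)_\E$ cancel, and the viscous and friction contributions collapse to the desired dissipation $-\int_\E \frac{a}{\rho^2}|\dx' m|^2 + b\frac{|m|^3}{\rho^2}\,dx$. The only remaining term is the pressure work $(P'(\rho),\dx' m)_\E$, which I handle by applying the first variational equation a second time with $q=P'(\rho)\in L^2(\E)$: this yields $(\dx' m, P'(\rho))_\E = -(\dt\rho, P'(\rho))_\E = -\frac{d}{dt}\int_\E P(\rho)\,dx$ using $\frac{d}{dt}P(\rho)=P'(\rho)\dt\rho$. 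Moving this term to the left produces exactly the claimed energy balance.

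The computation is largely bookkeeping, and the only mild obstacle is verifying admissibility of the test functions: one needs $m(t)\in H_0(\div)$, which is immediate from the smoothness assumption in Notation~\ref{not:smooth} combined with the coupling and boundary conditions, and $P'(\rho(t))\in L^2(\E)$, which follows because $\rho\in H^1_+(\E)\subset L^\infty(\E)$ so $P'(\rho)=\gamma\frac{c}{\gamma-1}\rho^{\gamma-1}$ is bounded on each edge. All the hard work, namely the integration by parts and the use of the coupling condition \eqref{eq:sys5}, has already been absorbed into the variational principle itself; this is precisely why the balance laws follow so cleanly at this level.
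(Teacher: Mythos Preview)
Your argument is correct and follows essentially the same route as the paper: test the first variational equation with $q=1$ for mass, and test with $v=m$ and $q=P'(\rho)$ for energy, using the pointwise identity $\tfrac{d}{dt}(m^2/2\rho)=(\tfrac{1}{\rho}\dt m-\tfrac{m}{2\rho^2}\dt\rho)\,m$ and the cancellation of the convective terms. The paper presents the energy computation in a slightly different order (first writing out the full time derivative of the energy, then substituting both variational identities at once), but the content is identical; your added remarks on the admissibility of $m\in H_0(\div)$ and $P'(\rho)\in L^2(\E)$ are a welcome clarification.
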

\begin{proof}
As shown in the previous lemma, any smooth solution satisfies the above variational principle. Testing the first variational equation with $q=1$ leads to 
\begin{align*}
\frac{d}{dt} \int_\E \rho dx 
&= (\dt \rho,1)_\E 
 = -(\dx' m, 1)_\E
 = -\sum_{v \in \V} \sum_{e \in \E(v)} m_e(v) n_e(v)  = 0.
\end{align*}
Here we used integration-by-parts in the third step, 
and the coupling and boundary conditions stated in \eqref{eq:sys4} and \eqref{eq:sys6} in the last step.
For the second assertion, observe that 
\begin{align*}
\frac{d}{dt} \int_\E \frac{m^2}{2\rho} + P(\rho) dx 
&= \left(\frac{1}{\rho} \dt m - \frac{m}{2\rho^2} \dt \rho,m\right)_{\!\E} + \Big(\dt \rho, P'(\rho)\Big)_{\!\E} = (*). 
\end{align*}
Testing the variational principle with $q=P'(\rho)$ and $v=m$, we further obtain
\begin{align*}
(*) &= \left(\frac{m^2}{2\rho^2} + P'(\rho) - \frac{a}{\rho^2} \dx' m,\dx' m\right)_{\!\!\E} 
- \left(\frac{m}{2\rho^2} \dx' m,m\right)_{\!\!\E} - \left(b \frac{|m|m}{\rho^2},m\right)_{\!\!\E} - \Big(\dx' m, P'(\rho)\Big)_{\E} 
\\&
= - \left(\frac{a}{\rho^2} \dx' m, \dx' m\right)_{\!\!\E} - \left(b \frac{|m|m}{\rho^2},m\right)_{\!\!\E}.
\end{align*}
This shows the energy identity stated in the lemma and completes the proof.
\end{proof}
As a direct consequence of the previous lemma, we obtain the following basic identities.
\begin{lemma}[Conservation of mass and energy] $ $\\
Let $(\rho,m)$ be a smooth solution of \eqref{eq:sys1}--\eqref{eq:sys7} in the sense of Notation~\ref{not:smooth}. 
Then 
\begin{align*}
\int_\E \rho(t) dx = \int_\E \rho_0 dx
\end{align*}
and
\begin{align*}
\int_\E \frac{m(t)^2}{2\rho(t)} + P(\rho(t)) dx + \int_0^t \int_\E a \frac{|\dx' m(s)|^2}{|\rho(s)|^2} + b \frac{|m(s)|^3}{|\rho(s)|^2} dx \; ds 
 = \int_\E \frac{m_0^2}{2\rho_0} + P(\rho_0) dx.
\end{align*}
\end{lemma}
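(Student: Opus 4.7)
The plan is to obtain both identities as straightforward time integrals of the differential balance relations furnished by the previous lemma. Since a smooth solution in the sense of Notation~\ref{not:smooth} belongs to $C^1([0,T];L^\infty(\E)\times L^2(\E))$ with $\rho$ uniformly bounded below away from zero, the functionals $t \mapsto \int_\E \rho(t)\,dx$ and $t \mapsto \int_\E \frac{m(t)^2}{2\rho(t)} + P(\rho(t))\,dx$ are continuously differentiable on $[0,T]$, and the dissipation integrand $\frac{a}{\rho^2}|\dx' m|^2 + b\frac{|m|^3}{\rho^2}$ is continuous in time with values in $L^1(\E)$. Hence the fundamental theorem of calculus applies on each side.

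For the mass identity, I would simply integrate the relation $\frac{d}{dt}\int_\E \rho\,dx = 0$ from Lemma~\ref{lem:identities} over $[0,t]$, obtaining $\int_\E \rho(t)\,dx = \int_\E \rho(0)\,dx$, and then invoke the initial condition \eqref{eq:sys7} to replace $\rho(0)$ by $\rho_0$.

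For the energy identity, I would integrate the second relation of Lemma~\ref{lem:identities} from $0$ to $t$. The left-hand time derivative integrates to the difference of the total energy at times $t$ and $0$, while the dissipation term contributes the double integral $\int_0^t\int_\E \frac{a}{\rho^2}|\dx' m|^2 + b\frac{|m|^3}{\rho^2}\,dx\,ds$. Rearranging and once again using \eqref{eq:sys7} to substitute $\rho(0)=\rho_0$ and $m(0)=m_0$ yields the claimed balance.

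There is no genuine obstacle here; the only point requiring a line of justification is that the regularity assumed in Notation~\ref{not:smooth} is strong enough to make the pointwise-in-time identities of Lemma~\ref{lem:identities} integrable and to legitimise the use of the fundamental theorem of calculus. Everything else is a direct consequence of the preceding lemma.
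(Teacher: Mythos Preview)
Your proposal is correct and matches the paper's treatment: the paper states this lemma as ``a direct consequence of the previous lemma'' and gives no separate proof, precisely because the identities follow by integrating the differential balance relations of Lemma~\ref{lem:identities} over $[0,t]$ and invoking the initial conditions~\eqref{eq:sys7}. Your remark on the regularity needed for the fundamental theorem of calculus is a reasonable addition, but otherwise there is nothing to add.
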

This shows that the total mass is conserved and that the energy is decreased only by dissipation due to viscous forces and friction. Note that the sum of total and dissipated energy is conserved as well, which is why we speak of energy conservation also in the presence of viscous forces and friction. The uniform bounds for the energy and the dissipation terms allows us to prove the existence of solutions to finite dimensional approximations later on.

\subsection{Comments on the variational principle}

Let us briefly put the results of the previous lemmas into perspective: The proof of Lemma~\ref{lem:identities} reveals that the variational principle of Lemma~\ref{lem:variational} is very tightly connected to the global conservation laws for mass and energy. The last step in the proof of Lemma~\ref{lem:variational} additionally shows that the coupling conditions \eqref{eq:sys4}--\eqref{eq:sys5} are the natural ones for this weak formulation and that they guarantee that no mass or energy is generated or annihilated at the junctions. 
Note that the convective terms 
arising on the right hand side of the second variational equation are antisymmetric and therefore disappear when testing with $v=m$. Also the pressure terms arising from the first and second equation in the variational principle annihilate naturally.
These inherent symmetry properties are directly related to the particular form of our variational characterization and almost immediately imply the conservation of mass and energy. 
Finally note that no space derivatives of the density appear in the weak formulation.
This substantially simplifies the design and analysis of appropriate numerical approximations in the following sections.

\section{Galerkin approximation in space} \label{sec:galerkin}

As a first step towards the numerical solution of problem \eqref{eq:sys1}--\eqref{eq:sys7}, we now consider a conforming Galerkin approximation in space by a mixed finite element method. We only discuss the lowest order approximation in detail.
Similar arguments may however also be used for the design and analysis of higher order approximations. 

\subsection{The mesh and polynomial spaces}

Let $[0,\ell_e]$ be the interval related to the edge $e$.  
We denote by $T_h(e) = \{K\}$ a uniform mesh of $e$ with elements $K$ of length $h_K$. 
The global mesh is defined as $T_h(\E) = \{T_h(e) : e \in \E\}$, 
and as usual, we denote by $h=\max_{K \in T_h(\E)} h_K$ the global mesh size. 
Let us define spaces of piecewise polynomials by
\begin{align*}
 P_k(T_h(\E)) &= \{v \in L^2(\E) : v|_e \in P_k(T_h(e)) \ \forall e \in \E\}. 
\end{align*}
Here $P_k(T_h(e)) = \{v \in L^2(e) : v|_K \in P_k(K), \ K \in T_h(e)\}$ is the space of piecewise polynomials on the mesh $T_h(e)$ of a single edge $e$, and $P_k(K)$ 
is the space of polynomials of degree $\le k$ on the subinterval $K$. 
For approximation of the density $\rho$ and the mass flux $m$, we consider functions in the finite element spaces
\begin{align} \label{eq:spaces}
V_h = P_{1}(T_h(\E)) \cap H_0(\div)
\quad \text{and} \quad 
Q_h = P_{0}(T_h(\E)). 
\end{align}
These spaces have good approximation properties and have been used successfully for the numerical approximations of damped wave propagation on networks; see \cite{EggerKugler16} for details.

\subsection{The Galerkin semi-discretization}

For the space discretization of the compressible flow problem \eqref{eq:sys1}--\eqref{eq:sys7},
we consider the following finite dimensional approximations.

\begin{problem}[Semi-discretization] \label{prob:semi} 
Find $(\rho_h,m_h) \in C^1(0,T;Q_h \cap V_h)$ with $\rho_h(0) = \rho_{0,h}$ and $m_h(0)=m_{0,h}$, 
and such that for all $v_h \in V_h$ and $q_h \in Q_h$, and every $t \in [0,T]$, there holds
\begin{align*}
\left(\dt \rho_h,q_h\right)_\E 
  &= - \left(\dx' m_h,q_h\right)_\E,  \\
\left(\frac{1}{\rho_h} \dt m_h - \frac{m_h}{2\rho_h^2} \dt \rho_h,v_h\right)_{\!\!\E} \!
  &= \! \left(\frac{m_h^2}{2\rho_h^2} +P'(\rho_h) - \frac{a}{\rho_h^2} \dx' m_h, \dx' v_h\right)_{\!\!\E} 
 \! - \! \left(\frac{m_h}{2\rho_h^2} \dx' m_h + b \frac{|m_h| m_h}{\rho_h^2},v_h\right)_{\!\!\E}\!.
\end{align*}
Here $0 < \rho_{0,h} \in Q_h$ and $m_{0,h} \in V_h$ are given approximations for the initial values. 
\end{problem}

\subsection{Mass and energy conservation}

We will show below that the semi-discrete problem is uniquely solvable. 
Before doing that, let us present some identities for the conservation of mass and energy, which more or less directly follow from our construction of the variational principle and the fact that we are using a \emph{conforming} Galerkin approximation.

\begin{lemma}[Balance of mass and energy] \label{lem:identitiesh} $ $\\
Let $(\rho_h,m_h)$ be a solution of Problem~\ref{prob:semi}. 
Then 
\begin{align*}
\frac{d}{dt} \int_\E \rho_h dx = 0 
\end{align*}
and 
\begin{align*}
\frac{d}{dt} \int_\E \frac{m_h^2}{2\rho_h} + P(\rho_h) dx 
+ \int_\E \frac{a}{\rho_h^2} |\dx' m_h|^2 + b \frac{|m_h|^3}{\rho_h^2} dx=0, 
\end{align*}
i.e., the mass and energy identities of Lemma~\ref{lem:identities} also hold for the semi-discretization. 
\end{lemma}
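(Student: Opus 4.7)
The plan is to mimic the proof of Lemma~\ref{lem:identities} step by step, using the discrete variational equations of Problem~\ref{prob:semi} in place of those in Lemma~\ref{lem:variational}. The crucial observation that makes this work is that the Galerkin discretization is \emph{conforming}: $V_h \subset H_0(\div)$ and $Q_h \subset L^2(\E)$, so every admissible test function at the continuous level remains admissible at the discrete level, provided it actually lies in the finite element spaces.

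For the mass identity, I would test the first equation in Problem~\ref{prob:semi} with $q_h \equiv 1$, which is legitimate because constants belong to $Q_h = P_0(T_h(\E))$. This yields $\frac{d}{dt}\int_\E \rho_h\,dx = -(\dx' m_h, 1)_\E$, and element-wise integration by parts together with $m_h \in V_h \subset H_0(\div)$ makes all interface and boundary contributions drop out, exactly as in the continuous case.

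For the energy identity, I would proceed as in Lemma~\ref{lem:identities} by computing
\begin{align*}
\frac{d}{dt}\int_\E \frac{m_h^2}{2\rho_h} + P(\rho_h)\,dx
= \left(\frac{1}{\rho_h}\dt m_h - \frac{m_h}{2\rho_h^2}\dt\rho_h,\,m_h\right)_\E + \bigl(\dt \rho_h,\,P'(\rho_h)\bigr)_\E,
\end{align*}
and then use the two discrete variational equations with the test choices $v_h = m_h$ and $q_h = P'(\rho_h)$. The choice $v_h = m_h$ is immediate since $m_h \in V_h$. The decisive point is that $q_h = P'(\rho_h)$ is actually a legal test function: because $\rho_h \in Q_h$ is piecewise constant on each element $K$ and the coefficient $c_e$ is constant on each edge, $P'(\rho_h) = \frac{\gamma c_e}{\gamma-1}\rho_h^{\gamma-1}$ is piecewise constant on $T_h(\E)$, hence lies in $Q_h$. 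Once this is established, the antisymmetric cancellation of the convective term $\bigl(\frac{m_h}{2\rho_h^2}\dx' m_h,\,m_h\bigr)_\E$ against its counterpart in $\bigl(\frac{m_h^2}{2\rho_h^2},\dx' m_h\bigr)_\E$, and the cancellation of the pressure contribution $\bigl(P'(\rho_h),\dx' m_h\bigr)_\E$ against $\bigl(\dt \rho_h, P'(\rho_h)\bigr)_\E$ via the mass equation, goes through verbatim as in the continuous proof, leaving only the viscous dissipation $\bigl(\frac{a}{\rho_h^2}\dx' m_h,\dx' m_h\bigr)_\E$ and the friction dissipation $\bigl(b\frac{|m_h|m_h}{\rho_h^2},m_h\bigr)_\E$.

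The only nontrivial point, and the one I expect could trip up a careless reader, is precisely the verification that $P'(\rho_h) \in Q_h$; this is what makes the choice of spaces \eqref{eq:spaces} \textbf{compatible with the nonlinear energy structure} and is the reason a piecewise constant density space is used together with the edge-wise constant parameter $c_e$. Apart from this bookkeeping, no new ideas beyond those of Lemma~\ref{lem:identities} are required, and implicitly one uses that the semi-discrete solution stays uniformly positive so that all expressions involving $1/\rho_h$ are well defined; this latter point is presumably addressed in the subsequent well-posedness analysis and may simply be assumed here.
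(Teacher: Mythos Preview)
Your proposal is correct and follows exactly the approach the paper intends: the paper's proof simply reads ``The assertions follow similar as in the proof of Lemma~\ref{lem:identities},'' and you have spelled out precisely those steps, including the key observation (left implicit in the paper) that $P'(\rho_h)\in Q_h$ because $\rho_h$ is piecewise constant and $c_e$ is edge-wise constant.
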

\begin{proof}
The assertions follow similar as in the proof of Lemma~\ref{lem:identities}. 
\end{proof}

Upon integration of the two identities, we obtain the following global conservation laws.

\begin{lemma}[Conservation of mass and uniform bounds for the energy] \label{lem:identitiesh2}$ $\\
Let $(\rho_h,m_h)$ be a solution of Problem~\ref{prob:semi}. Then
\begin{align*}
\int_\E \rho_h(t) dx = \int_\E \rho_{0,h} dx
\end{align*}
and
\begin{align*}
\int_\E \frac{m_h(t)^2}{2\rho_h(t)} + P(\rho_h(t)) dx 
+ \int_0^t \int_\E a \frac{|\dx' m_h(s)|^2}{|\rho_h(s)|^2} + b \frac{|m_h(s)|^3}{|\rho_h(s)|^2} dx \; ds 
= \int_\E  \frac{m_{0,h}^2}{2\rho_{0,h}} + P(\rho_{0,h}) dx.
\end{align*}
\end{lemma}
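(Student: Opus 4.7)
The plan is to derive both global statements by simply integrating the differential balance laws of Lemma~\ref{lem:identitiesh} in time from $0$ to $t$, using the prescribed initial values $\rho_h(0)=\rho_{0,h}$ and $m_h(0)=m_{0,h}$.

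First I would invoke Lemma~\ref{lem:identitiesh}, which asserts that along any solution of Problem~\ref{prob:semi} the function $t\mapsto \int_\E \rho_h(t)\,dx$ has vanishing time derivative. Since $(\rho_h,m_h)\in C^1(0,T;Q_h\cap V_h)$, this derivative is continuous in $t$, so by the fundamental theorem of calculus
\begin{align*}
\int_\E \rho_h(t)\,dx - \int_\E \rho_{0,h}\,dx = \int_0^t \frac{d}{ds}\int_\E \rho_h(s)\,dx\;ds = 0,
\end{align*}
which yields the first identity.

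For the energy identity, I would apply the same reasoning to the second identity of Lemma~\ref{lem:identitiesh}, which states
\begin{align*}
\frac{d}{dt}\int_\E \Big(\frac{m_h^2}{2\rho_h}+P(\rho_h)\Big)\,dx
= -\int_\E \Big(\frac{a}{\rho_h^2}|\dx' m_h|^2 + b\frac{|m_h|^3}{\rho_h^2}\Big)\,dx.
\end{align*}
Integrating from $0$ to $t$, using the initial values to identify the boundary term at $s=0$, and rearranging, one obtains exactly the stated global energy balance. The nonlinear integrands $m_h^2/(2\rho_h)$ and $P(\rho_h)$ are well-defined and continuously differentiable in $t$ because $\rho_h$ stays uniformly positive on any interval on which the semi-discrete solution exists, so all integrations are unproblematic.

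The only real subtlety, which is not a genuine obstacle here but would be the place one has to be careful, is the positivity of $\rho_h$ needed to make sense of the integrands and of the differentiation in time. This is ensured on the interval of existence of the solution to Problem~\ref{prob:semi} (to be established subsequently), and once it is granted, the proof reduces to the one-line integration sketched above. I would therefore keep the write-up short, essentially pointing at Lemma~\ref{lem:identitiesh} and the fundamental theorem of calculus.
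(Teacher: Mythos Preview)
Your proposal is correct and matches the paper's approach exactly: the paper simply states that the lemma follows ``upon integration of the two identities'' of Lemma~\ref{lem:identitiesh}, which is precisely the fundamental-theorem-of-calculus argument you spell out. No further justification is given (or needed) in the paper.
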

Let us emphasize that these two identities are a verbatim translation of the conservation laws on the continuous level and they are derived in the very same manner.

\subsection{Well-posedness of the semi-discrete scheme}
As a consequence of the uniform bounds for the energy and the dissipation terms
resulting from the previous lemma, we are now able to establish the well-posedness of the semi-discretization.
\begin{lemma}[Well-posedness] \label{lem:wellposedh} 
Assume that $\rho_{0,h} \ge \underline\rho>0$. Then there exists a $T>0$ such that Problem~\ref{prob:semi} has a unique solution $(\rho_h,m_h) \in C^1([0,T);Q_h \times V_h)$ and $\rho_h(t)>0$ for all $0 \le t < T$. If $a \ge \underline a > 0$, 
then we can choose $T=\infty$, i.e., the solution is global.
\end{lemma}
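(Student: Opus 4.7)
My plan is to recast Problem~\ref{prob:semi} as a finite-dimensional system of ordinary differential equations on the open set $D := \{(\rho_h,m_h) \in Q_h \times V_h : \rho_h > 0 \text{ on } \E\}$, and then combine standard ODE theory with the a priori estimates of Lemma~\ref{lem:identitiesh2}. Expanding in bases of $Q_h$ and $V_h$ and inserting basis test functions, the coefficients of the time derivatives form a block lower-triangular mass matrix whose diagonal blocks are the (diagonal) mass matrix on $Q_h = P_0(T_h(\E))$ and the weighted matrix $A(\rho_h)_{ij}=(\phi_i/\rho_h,\phi_j)_\E$ coming from the $\tfrac{1}{\rho_h}\dt m_h$ term, while the off-diagonal block arises from the cross-term $-\tfrac{m_h}{2\rho_h^2}\dt\rho_h$. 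Both diagonal blocks are symmetric and positive definite as long as $\rho_h > 0$, so the system takes the form $\dot y = F(y)$ with $F$ smooth on $D$. Starting from $\rho_{0,h}\ge \underline\rho>0$, Picard--Lindelöf gives a unique maximal solution on some $[0,T_*)$ with $T_*\in(0,\infty]$ that either extends globally or leaves every compact subset of $D$ as $t\to T_*^-$.

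Next, I would use Lemma~\ref{lem:identitiesh2} (valid on $[0,T_*)$) to rule out coefficient blow-up. The energy bound on $\int_\E P(\rho_h)+m_h^2/(2\rho_h)\,dx$, together with $\gamma>1$ and the finite dimensionality of $Q_h,V_h$, yields uniform $L^\infty$ bounds on both $\rho_h$ and $m_h$ on $[0,T_*)$. Consequently the only obstruction to extending the solution is that $\min_{x\in\E}\rho_h(t)\to 0$ as $t\to T_*^-$, which already yields the first assertion of the lemma upon choosing $T$ strictly below the first such time.

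The heart of the argument is to exclude this collapse when $a\ge\underline a>0$. Since $\rho_h|_K=:\rho_K$ is a scalar function of time on every $K\in T_h(\E)$, and $\dx' m_h$ is constant on $K$, testing the first variational equation with $q_h=\chi_K$ yields the algebraic identity
\[
|K|\,(\dt\rho_K)^2 \;=\; \int_K (\dx' m_h)^2\,dx.
\]
Dividing by $\rho_K^2$, integrating in time, and invoking the viscous dissipation part of Lemma~\ref{lem:identitiesh2} gives
\[
|K|\int_0^t\Big|\tfrac{d}{ds}\log\rho_K(s)\Big|^2\,ds \;\le\; E_0/\underline a.
\]
Cauchy--Schwarz then provides $\rho_K(t)\ge\underline\rho\, e^{-C_K\sqrt t}$ for every element $K$ and every $t<T_*$, with $C_K=\sqrt{E_0/(\underline a\,|K|)}$. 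Hence $\rho_h$ stays strictly positive on every finite interval, and combined with the $L^\infty$ bounds of the previous step the solution cannot leave a compact subset of $D$ in finite time, forcing $T_*=\infty$.

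The main obstacle will be precisely this last step: recognizing that, on the chosen lowest-order pair $(Q_h,V_h)=(P_0,\,P_1\cap H_0(\div))$, the viscous dissipation integral can be re-read as an $L^2$-in-time control of the logarithmic derivative of the piecewise constant density via the elementwise identity above. Once this identification is made, local existence and uniqueness, as well as the passage to global existence, follow from essentially standard ODE arguments applied on the open positivity cone $D$.
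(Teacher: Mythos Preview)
Your proposal is correct and follows essentially the same strategy as the paper: Picard--Lindel\"of on the positivity cone for local existence, and then the elementwise continuity equation combined with the viscous dissipation bound of Lemma~\ref{lem:identitiesh2} and Cauchy--Schwarz in time to obtain an $e^{-C\sqrt{t}}$ lower bound on $\rho_h$. The only cosmetic difference is in the positivity step: the paper applies Gronwall to $\int_K \tfrac{1}{\rho_h}\,dx$ together with a norm-equivalence constant $C_h$, whereas you exploit directly that $\dx' m_h$ is piecewise constant to read off an $L^2$-in-time bound on $\tfrac{d}{dt}\log\rho_K$ --- a slightly cleaner variant of the same idea.
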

\begin{proof}
Since the spaces $Q_h$ and $V_h$ are finite dimensional and $\rho_h(0)>0$, the local existence of a solution follows readily from the Picard-Lindelöf theorem. Moreover, the solution can be extended uniquely as long as $\rho_h(t)>0$. 
For any element $K$ of the mesh, we have 
\begin{align*}
\frac{d}{dt} \int_K \frac{1}{\rho_h} dx 
&= -\left( \dt \rho_h, \frac{1}{\rho_h^2}\right)_K
 = \left(\dx' m_h, \frac{1}{\rho_h^2}\right)_K \\
&= \left(\frac{1}{\rho_h} \dx' m_h, \frac{1}{\rho_h}\right)_K
 \le \left\|\frac{1}{\rho_h} \dx' m_h\right\|_{L^\infty(K)} \int_K \frac{1}{\rho_h} dx.  
\end{align*}
From the equivalence of norms on finite dimensional spaces, 
we know that 
\begin{align*}
\left\|\frac{1}{\rho_h} \dx' m_h\right\|_{L^\infty(K)} 
\le \left\|\frac{1}{\rho_h} \dx' m_h\right\|_{L^\infty(\E)} 
\le C_h \left\|\frac{1}{\rho_h} \dx' m_h\right\|_{L^2(\E)}.
\end{align*}
By the Gronwall lemma, we thus obtain 
\begin{align*}
\int_K \frac{1}{\rho_h(t)} dx 
&\le e^{\int_0^t C_h \left\|\frac{1}{\rho_h(s)} \dx' m_h(s)\right\|_{L^2(\E)} ds} \int_K \frac{1}{\rho_h(0)} dx.
\end{align*}
The integral in the exponential term can be further estimated by
\begin{align*}
\int_0^t \left\|\frac{1}{\rho_h(s)} \dx' m_h(s)\right\|_{L^2(\E)} ds 
\le \sqrt{t} \Big(\int_0^t \left\|\frac{1}{\rho_h(s)} \dx' m_h(s)\right\|_{L^2(\E)}^2 ds \Big)^{1/2}.
\end{align*}
If $a \ge \underline a > 0$, the term in parenthesis can be bounded uniformly in $t$ by the estimates for the dissipation term provided by the energy identity of Lemma~\ref{lem:identitiesh2}.
Using that $h_{min} \le |K| \le h_{max}$ for some $h_{max},h_{min}>0$ and all elements $K$, 
we then obtain
\begin{align*}
h_{min} \left\|\frac{1}{\rho_h(t)}\right\|_{L^\infty(\E)} 
&\le \max_{K \in T_h(\E)} \int_K \frac{1}{\rho_h(t)} dx 
\le  e^{\sqrt{t} C} h_{max} \left\|\frac{1}{\rho_{0,h}}\right\|_{L^\infty(\E)}.
\end{align*}
This shows that the density stays positive for all time.
From the energy identity, one can then further deduce that also $\|\rho_h\|_{L^\infty(\E)}$ and $\|m_h\|_{L^\infty(\E)}$ at most increase exponentially in time, such that the solution can in fact be extended globally.
\end{proof}

\section{Time discretization}  \label{sec:time}

As a final step in the discretization procedure, 
we now consider the time discretization of the Galerkin approximation considered in the previous section. 
Let $\tau>0$ denote the time-step and set $t^n = n \tau$ for $n \ge 0$. 
Given a sequence $\{d^n\}_{n \ge 0}$, 
we denote by 
\begin{align*}
\dtau d^n :=  \frac{d^n - d^{n-1}}{\tau} \quad \text{for } n \ge 0,
\end{align*}
the backward differences which are taken as approximations for the time derivative terms. 
Different value $\tau_n>0$ for the individual time steps could in principle be chosen as well. 
\subsection{Fully discrete scheme}
For the time discretization of the Galerkin approximation stated in Problem~\ref{prob:semi}, 
we now consider the following implicit time stepping scheme.
\begin{problem}[Fully discrete method] \label{prob:full} 
Let $\rho_{0,h}$ and $m_{0,h}$ be given as in Problem~\ref{prob:semi}, 
and set $\rho_h^0=\rho_{0,h}$ and $m_h^0=m_{0,h}$. 
Then for $n \ge 1$ find $(\rho_h^n,m_h^n)\in Q_h\times V_h$, such that 
\begin{align*}
\left(\dtau \rho_h^n,q_h\right)_\E 
  &= - \left(\dx' m_h^n,q_h\right)_\E,  \\
\left(\frac{1}{\rho_h^{n-1}} \dtau m_h^n - \frac{m_h^n}{2|\rho_h^{n}|^2} \dtau \rho_h^n,v_h\right)_{\!\!\E}
  &= \left(\frac{|m_h^n|^2}{2|\rho_h^n|^2} + P'(\rho_h^n) - \frac{a}{|\rho_h^n|^2} \dx' m_h^n, \dx' v_h\right)_{\!\!\E} 
\\ & \qquad \qquad \qquad 
 \! - \! \left(\frac{m_h^n}{2|\rho_h^n|^2} \dx' m_h^n, v_h\right)_{\!\!\E} 
 \! - \! \left(b \frac{|m_h^n| m_h^n}{|\rho_h^n|^2},v_h\right)_{\!\!\E}
\end{align*}
hold for all test functions $q_h \in Q_h$ and $v_h \in V_h$.
\end{problem}

\noindent
The well-posedness of the method will be stated below. Before doing that, 
let us summarize the basic conservation principles for mass and energy for the full discretization.

\subsection{Conservation of mass and dissipation of energy} 

The particular form of the fully discrete variational problem allows us to immediately obtain the following balance relations for mass and energy by appropriate choice of the test functions.
\begin{lemma}[Conservation of mass and uniform bounds for the energy] $ $ \label{lem:identitieshh}\\
Let $(\rho_h^n,m_h^n)_{n \ge 0}$ be a solution of Problem~\ref{prob:full}. 
Then for all $n \ge 0$ we have 
\begin{align*}
\int_\E \rho_h^n dx = \int_\E \rho_{0,h}
\end{align*}
and 
\begin{align*}
\int_\E \frac{|m_h^n|^2}{2\rho_h^n} + P(\rho_h^n) dx 
+ \tau  \sum_{k=1}^n \int_\E a\frac{|\dx' m_h^k|^2}{|\rho_h^k|^2}  + b \frac{|m_h^k|^3}{|\rho_h^j|^2} dx
\le \int_\E \frac{m_{0,h}^2}{2\rho_{0,h}} + P(\rho_{0,h}) dx.
\end{align*}
The mass is thus conserved and energy is dissipated effectively by viscous forces and friction.
\end{lemma}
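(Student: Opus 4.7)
The plan is to mimic, at the fully discrete level, the proof of Lemma~\ref{lem:identitiesh2}, exploiting the same symmetry and cancellation structure of the variational principle. The backward differences will force the energy balance to become a one-sided inequality, with the defect representing numerical dissipation.

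Mass conservation is immediate: since $1 \in Q_h$, I would test the first equation of Problem~\ref{prob:full} with $q_h = 1$. Using that $m_h^n \in V_h \subset H_0(\div)$ and the integration-by-parts identity of Section~\ref{sec:prelim}, this gives $(\dtau \rho_h^n, 1)_\E = -(\dx' m_h^n, 1)_\E = 0$, and telescoping in $n$ yields the stated identity for the mass.

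For the energy, I would test the momentum equation with $v_h = m_h^n \in V_h$ and the continuity equation with $q_h = P'(\rho_h^n) \in Q_h$ (admissible since $\rho_h^n$, and hence $P'(\rho_h^n) = c\gamma (\rho_h^n)^{\gamma-1}$, is piecewise constant). Exactly as on the continuous level, the convective contributions on the right of the tested momentum equation cancel by antisymmetry, $\left(\frac{m_h^n}{2|\rho_h^n|^2}\dx' m_h^n, m_h^n\right)_\E = \left(\frac{|m_h^n|^2}{2|\rho_h^n|^2}, \dx' m_h^n\right)_\E$, and the two contributions $\pm (P'(\rho_h^n), \dx' m_h^n)_\E$ from the two equations cancel when the equations are added, leaving only the dissipation $-\int_\E (a|\dx' m_h^n|^2 + b|m_h^n|^3)/|\rho_h^n|^2\, dx$ on the right-hand side. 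To bound the tested left-hand side from below by $\dtau \int_\E \bigl(|m_h^n|^2/(2\rho_h^n) + P(\rho_h^n)\bigr)\, dx$, I would use two convexity bounds: first, $\dtau P(\rho_h^n) \le P'(\rho_h^n)\, \dtau \rho_h^n$, which is immediate from convexity of $P$ for $\gamma > 1$; and second, the pointwise discrete chain rule
\[
\frac{m^n(m^n - m^{n-1})}{\rho^{n-1}} - \frac{|m^n|^2(\rho^n - \rho^{n-1})}{2|\rho^n|^2} = \frac{|m^n|^2}{2\rho^n} - \frac{|m^{n-1}|^2}{2\rho^{n-1}} + R^n,
\]
whose remainder, after expanding $|m^n|^2 - |m^{n-1}|^2 = 2m^n(m^n-m^{n-1}) - (m^n-m^{n-1})^2$ and using $\tfrac{1}{\rho^{n-1}} - \tfrac{1}{\rho^n} = \tfrac{\rho^n - \rho^{n-1}}{\rho^n \rho^{n-1}}$, evaluates to
\[
R^n = \frac{(m^n - m^{n-1})^2}{2\rho^{n-1}} + \frac{|m^n|^2(\rho^n - \rho^{n-1})^2}{2|\rho^n|^2 \rho^{n-1}} \ge 0.
\]
Combining these two convexity bounds with the tested equations and summing from $n = 1$ to $N$ telescopes the left-hand side and yields the claimed energy inequality.

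The main obstacle is this discrete chain rule. The precise pairing of time indices prescribed in Problem~\ref{prob:full}, namely $\rho_h^{n-1}$ with $\dtau m_h^n$ and $|\rho_h^n|^2$ with $\dtau \rho_h^n$, is exactly what makes $R^n$ decompose into a sum of non-negative squares, and hence what turns the scheme into a stable one; most alternative pairings would either spoil the non-negativity of $R^n$ or break the cancellations in the convective terms. Once the identity is in hand, the rest of the proof is a routine transcription of the semi-discrete argument.
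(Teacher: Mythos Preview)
Your proposal is correct and follows essentially the same route as the paper's proof. The only cosmetic difference is that where you compute the explicit remainder $R^n$ and verify it is a sum of nonnegative squares, the paper instead invokes the convexity inequality $f(y^n)\le f(y^{n-1})+f'(y^n)(y^n-y^{n-1})$ separately for $f(\rho)=|m_h^n|^2/(2\rho)$ and $f(m)=m^2/(2\rho_h^{n-1})$; these two presentations are equivalent, and the remaining cancellations and telescoping are identical.
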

The implicit time integration scheme yields some extra numerical dissipation,
which leads to the inequality instead of equality in the energy balance. 
The uniform bounds for energy and dissipation again plays an important role for the well-posedness of the scheme. 
\begin{proof}
The first identity follows by testing Problem~\ref{prob:full} with $q_h=1$ and $v_h=0$.
Now turn to the second: By elementary manipulations, we can split 
\begin{align*}
\int_\E \frac{|m_h^n|^2}{2\rho_h^n} dx 
&= \int_\E \frac{|m_h^{n-1}|^2}{2\rho_h^{n-1}} dx
+ \int_\E \frac{|m_h^n|^2}{2 \rho_h^n} - \frac{|m_h^n|^2}{2 \rho_h^{n-1}} dx 
+ \int_\E \frac{|m_h^n|^2}{2 \rho_h^{n-1}} - \frac{|m_h^{n-1}|^2}{2 \rho_h^{n-1}} dx. 
\end{align*}
Now recall that for any convex differentiable scalar function $f(y)$, we have 
\begin{align*}
f(y^n) \le f(y^{n-1}) + f'(y^{n}) (y^n - y^{n-1}).  
\end{align*}
Applying this to the functions $f(\rho)=\frac{|m_h^n|^2}{2\rho}$ and $f(m)=\frac{m^2}{2 \rho_h^{n-1}}$, which are both convex with respect to their arguments, we obtain 
\begin{align*}
\int_\E \frac{|m_h^n|^2}{2\rho_h^n} dx 
&\le \int_\E \frac{|m_h^{n-1}|^2}{2\rho_h^{n-1}} dx 
 -  \int_\E \frac{|m_h^n|^2}{2 |\rho_h^n|^2} (\rho_h^n - \rho_h^{n-1}) dx 
 +  \int_\E \frac{m_h^n}{\rho_h^{n-1}} (m_h^n - m_h^{n-1}) dx \\
&= \int_\E \frac{|m_h^{n-1}|^2}{2\rho_h^{n-1}} dx 
   + \tau \left(\frac{1}{\rho_h^{n-1}} \dtau m_h^n - \frac{m_h^n}{2 |\rho_h^n|^2} \dtau \rho_h^n, m_h^n\right)_{\!\!\E}. 
\end{align*}
For the last term we can use the discrete variational principle. 
Since the potential energy density is again a convex function, one obtains in a similar manner that
\begin{align*}
\int_\E P(\rho_h^n) dx 
&\le \int_\E P(\rho_h^{n-1}) dx 
 + \tau \left(\dtau \rho_h^n, P'(\rho_h^n)\right)_{\E},
\end{align*}
and the last term can again be treated by the discrete variational principle.
The energy inequality for $n=1$ now follows from the definition of $(\rho_h^n,m_h^n)$ in Problem~\ref{prob:full} with the same reasoning as in Lemma~\ref{lem:identities}.
The result for $n>1$ is obtained by recursion.
\end{proof}

\subsection{Well-posedness of the fully discrete scheme}
Before closing this section, let us establish a basic result concerning the solvability of the nonlinear problems that arise in every single time step of of the fully discrete scheme.
\begin{lemma}[Well-posedness] \label{lem:welposedhh} 
Let $(\rho_h^{n-1},m_h^{n-1}) \in Q_h \times V_h$ with $\rho_h^{n-1} > 0$. Then for $\tau>0$ sufficiently small, the system for determining $(\rho_h^{n},m_h^{n})$ in Problem~\ref{prob:full} has a unique solution with $\rho_h^{n} > 0$. 
If in addition $a \ge \underline a > 0$, then a solution exists for any choice of the time step. 
\end{lemma}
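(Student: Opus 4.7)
The plan is to reduce the coupled discrete system to a single nonlinear equation in $m_h^n$ and then apply a fixed-point argument whose form depends on the size of $\tau$ and on whether $a$ is bounded away from zero. Exploiting the inclusion $\dx' V_h \subset Q_h$, the first equation of Problem~\ref{prob:full} holds identically in $Q_h$, giving the pointwise identity $\rho_h^n = \rho_h^{n-1} - \tau \dx' m_h^n$. Substituting this into the second equation reduces the task to finding a root of a continuous map $F_\tau(m_h^n) = 0$ on the open subset $U_\tau = \{ m \in V_h : \rho_h^{n-1} - \tau \dx' m > 0 \text{ on } \E \}$ of the finite-dimensional space $V_h$; by hypothesis, $m_h^{n-1} \in U_\tau$.

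For the small-$\tau$ assertion, I would write $m_h^n = m_h^{n-1} + \tau w$ and multiply the second equation by $\tau$. The leading-order term is then the bilinear form $(w/\rho_h^{n-1}, v_h)_\E$, which is a weighted $L^2$ inner product on $V_h$ and hence coercive, since $\rho_h^{n-1} \ge \underline\rho > 0$ by hypothesis. All remaining contributions are smooth nonlinearities in $w$ of order $O(\tau)$. A Banach contraction on a small ball around $w = 0$ (equivalently, the implicit function theorem in finite dimensions) then yields a unique solution $w$, and hence $m_h^n$, for $\tau$ sufficiently small. Positivity of $\rho_h^n$ follows by continuity, since $\tau \dx' m_h^n$ can be made arbitrarily small in $L^\infty(\E)$ while $\rho_h^{n-1}$ is uniformly positive.

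For the case $a \ge \underline a > 0$ and arbitrary $\tau$, I would instead use a topological fixed-point argument (Brouwer applied to a regularized problem) together with the a~priori bounds from the discrete energy inequality of Lemma~\ref{lem:identitieshh}. Concretely, each denominator $\rho_h^n$ in Problem~\ref{prob:full} is replaced by $\max(\rho_h^n, \varepsilon)$ to obtain a continuous map defined on all of $V_h$; Brouwer then yields a regularized solution $m_h^{n,\varepsilon}$ on a sufficiently large ball. The energy inequality together with the viscous coercivity $a \ge \underline a$ bounds $\|\dx' m_h^{n,\varepsilon}/\rho_h^{n,\varepsilon}\|_{L^2(\E)}$ uniformly in $\varepsilon$, and transcribing the element-wise Gr\"onwall argument from the proof of Lemma~\ref{lem:wellposedh} to the discrete setting yields an $\varepsilon$-independent lower bound on $\rho_h^{n,\varepsilon}$; for $\varepsilon$ below this bound the truncation is inactive and a genuine solution of Problem~\ref{prob:full} is recovered.

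The main obstacle will be the interplay between the positivity constraint $\rho_h^n > 0$ and the nonlinear fixed-point iteration. For small $\tau$ this is handled trivially by continuity from $\rho_h^{n-1}$, but in the large-$\tau$ regime it is precisely the coercivity supplied by $a \ge \underline a > 0$ that keeps $\rho_h^n$ uniformly away from zero and allows the Brouwer argument to close, which is why the second assertion of the lemma requires this additional hypothesis.
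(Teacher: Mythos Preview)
Your proposal is essentially correct and follows the same strategy as the paper's proof: the implicit function theorem handles the small-$\tau$ case, while for arbitrary $\tau$ with $a\ge\underline a>0$ one combines the discrete energy estimate of Lemma~\ref{lem:identitieshh}, a discrete version of the element-wise Gr\"onwall argument from Lemma~\ref{lem:wellposedh} to bound $\|1/\rho_h^n\|_{L^\infty(\E)}$, and Brouwer's fixed-point theorem.

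Two minor points of comparison. First, your explicit substitution $\rho_h^n=\rho_h^{n-1}-\tau\,\dx' m_h^n$ via the inclusion $\dx' V_h\subset Q_h$ is a useful clarification that the paper leaves implicit. Second, the paper phrases the large-$\tau$ existence as ``a homotopy argument and Brouwer's fixed point theorem'' applied to the a~priori bounds for any putative solution, whereas you opt for an $\varepsilon$-truncation of the denominators followed by removal of the cutoff. Both routes are standard and essentially equivalent; just be aware that in your version the energy inequality of Lemma~\ref{lem:identitieshh} must be rederived for the truncated problem (the convexity argument with $f(\rho)=|m_h^n|^2/(2\rho)$ needs the genuine, not the truncated, $\rho$ in the denominator), so a little care is needed to ensure the a~priori bound on $\|\dx' m_h^{n,\varepsilon}/\rho_h^{n,\varepsilon}\|_{L^2(\E)}$ is indeed uniform in $\varepsilon$. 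The paper's homotopy formulation sidesteps this by working directly with hypothetical solutions of the unmodified system.
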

\begin{proof}
The first assertion follows from the implicit function theorem. To show the second assertion, we can proceed with similar arguments as in the proof of Lemma~\ref{lem:wellposedh} to obtain a uniform bound $\|\frac{1}{\rho_h^n}\|_{L^\infty(\E)} \le D_h e^{C_h n} \|\frac{1}{\rho_h^0}\|_{L^\infty(\E)}$ for any solution $(\rho_h^n,m_h^n)$. 
Via the discrete energy estimate of Lemma~\ref{lem:identitieshh}, we then also obtain uniform a-priori bounds for $\|m_h^n\|_{L^2(\E)}$ and $\|p(\rho_h)\|_{L^1(\E)}$, and by equivalence of norms on finite dimensional spaces also for $\|m_h^n\|_{L^\infty(\E)}$ and $\|\rho_h^n\|_{L^\infty(\E)}$. Existence of a solution for the next time step then follows by a homotopy argument and Brouwer's fixed point theorem. 
\end{proof}

\begin{remark} \label{rem:wellposedhh}
The previous lemma guarantees uniqueness of the solution $(\rho_h^n,m_h^n)$ only for a sufficiently small time step $\tau$. The size of the admissible time step will in general depend on $\rho_h^{n-1}$ and $m_h^{n-1}$, in particular on the lower bounds for $\rho_h^{n-1}$. As long as the density $\rho_h^{n-1}$ stays well away from zero, the time step problem will be uniquely solvable for reasonably large time steps, which we also observe in our numerical experiments.
\end{remark}

\section{Solution of the nonlinear problems} \label{sec:implementation}

Before we proceed to numerical tests, let us discuss in some more detail the actual 
implementation of the fully discrete scheme stated in Problem~\ref{prob:full}.
For the solution of the nonlinear system in the $n$th time step, we consider 
the following fixed-point iteration. 
\begin{problem}[Fixed-point iteration for the individual time step] \label{prob:fixed} $ $\\
Set $\widetilde \rho_h = \rho_h^{n-1}$ and $\widetilde m_h = m_h^{n-1}$. 
For $k=1,2,\ldots$ solve for $(\rho_h,m_h)$ the system
\begin{align*}
&\frac{1}{\tau}\left(\rho_h,q_h\right)_\E + \left(m_h,q_h\right)_\E = \frac{1}{\tau}\left(\rho_h^{n-1},q_h\right)_\E, \hspace*{5em} \\
&\frac{1}{\tau}\left(\frac{1}{\rho_h^{n-1}}  m_h ,v_h\right)_{\!\!\E}
- \left(\frac{\widetilde m_h}{2 \widetilde \rho_h^2} m_h + \frac{P'(\widetilde\rho_h)}{\widetilde \rho_h} \rho_h - \frac{a}{|\widetilde \rho_h|^2} \dx' m_h , \dx' v_h\right)_{\!\!\E} \hspace*{5em} \\
&\qquad \qquad \
+  \left(\frac{\widetilde m_h}{2 \widetilde \rho_h^2} \dx' m_h + \frac{b|\widetilde m_h|}{\widetilde \rho_h^2} m_h, v_h\right)_{\!\!\E} 
=\frac{1}{\tau}\left(\frac{1}{\rho_h^{n-1}}  m_h^{n-1} + \frac{\widetilde m_h}{2\widetilde \rho_h^2}  \left(\widetilde \rho_h - \rho_h^{n-1}\right),v_h\right)_{\!\!\E}.
\end{align*}
Then set $\widetilde \rho_h = \rho_h$ and $\widetilde m_h=m_h$, and repeat with $k=k+1$.  
\end{problem}
Note that the systems to be solved in every step of the fixed-point iteration are linear now. 
The iteration yields reasonable approximations for the nonlinear system to be solved.
\begin{lemma}[Equivalence]
Any fixed-point $(\rho_h,m_h)$ of the iteration stated in Problem~\ref{prob:fixed} with $\rho_h>0$ is also a solution $(\rho_h^n,m_h^n)$ of the nonlinear system in Problem~\ref{prob:full}.
\end{lemma}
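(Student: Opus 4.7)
The plan is direct. At a fixed point of the iteration one has, by definition, $\widetilde\rho_h = \rho_h$ and $\widetilde m_h = m_h$. I would substitute these equalities into the two linear equations of Problem~\ref{prob:fixed}, rename $\rho_h^n := \rho_h$ and $m_h^n := m_h$, and verify by elementary algebraic rearrangement that the resulting two equations coincide, term by term, with the nonlinear equations defining $(\rho_h^n,m_h^n)$ in Problem~\ref{prob:full}. The positivity hypothesis $\rho_h>0$ is used only to ensure that all rational expressions such as $1/\rho_h^2$ are pointwise well defined on $\E$.

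The mass equation is immediate: after multiplying by $\tau$ and moving $(\rho_h^{n-1},q_h)_\E$ to the left, the first line of Problem~\ref{prob:fixed} becomes $(\rho_h^n - \rho_h^{n-1},q_h)_\E = -\tau(\dx' m_h^n,q_h)_\E$, i.e.\ $(\dtau\rho_h^n,q_h)_\E = -(\dx' m_h^n,q_h)_\E$. For the momentum equation, the main manipulation is to transfer the \emph{temporal} correction terms on the right-hand side, namely $\tfrac{1}{\tau}\bigl(\tfrac{1}{\rho_h^{n-1}}m_h^{n-1} + \tfrac{m_h}{2\rho_h^2}(\rho_h-\rho_h^{n-1}),v_h\bigr)_\E$, to the left. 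After substitution of the fixed-point equalities, the temporal part assembles into
\[
\tfrac{1}{\tau}\Bigl(\tfrac{1}{\rho_h^{n-1}}(m_h-m_h^{n-1}) - \tfrac{m_h}{2\rho_h^2}(\rho_h-\rho_h^{n-1}),\,v_h\Bigr)_{\!\!\E} = \Bigl(\tfrac{1}{\rho_h^{n-1}}\dtau m_h^n - \tfrac{m_h^n}{2|\rho_h^n|^2}\dtau\rho_h^n,\,v_h\Bigr)_{\!\!\E},
\]
which matches the left-hand side in Problem~\ref{prob:full}. The remaining flux and friction terms are linearizations in $(\widetilde\rho_h,\widetilde m_h)$ that, at the fixed point, collapse correctly: $\tfrac{\widetilde m_h}{2\widetilde\rho_h^2}\,m_h$ becomes $\tfrac{|m_h^n|^2}{2|\rho_h^n|^2}$, the product $\tfrac{P'(\widetilde\rho_h)}{\widetilde\rho_h}\,\rho_h$ collapses to $P'(\rho_h^n)$, and the $a$- and $b$-terms take the form prescribed by Problem~\ref{prob:full} verbatim.

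No deep difficulty is expected: the linear iteration was deliberately designed as a Picard-type linearization about $(\widetilde\rho_h,\widetilde m_h)$, and the only care required is in the sign and bookkeeping of the split of $\dtau m_h^n$ and $\dtau\rho_h^n$ and in checking that each nonlinear product in Problem~\ref{prob:full} is reproduced when the tilded and untilded quantities agree. Once these identifications are made, the two systems coincide as identities in $Q_h\times V_h$ for arbitrary test pairs $(q_h,v_h)\in Q_h\times V_h$, which is exactly the claim.
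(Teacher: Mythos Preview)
Your proposal is correct and follows essentially the same approach as the paper: substitute $\widetilde\rho_h=\rho_h=\rho_h^n$ and $\widetilde m_h=m_h=m_h^n$ into the linear system of Problem~\ref{prob:fixed} and observe that it reduces term by term to the nonlinear system of Problem~\ref{prob:full}. The paper's proof is a one-liner to this effect; you simply spell out the bookkeeping in more detail.
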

\begin{proof}
Setting $\rho_h=\widetilde \rho_h=\rho_h^n$ and $m_h=\widetilde m_h=m_h^n$ in the above 
iteration directly leads to the nonlinear system of Problem~\ref{prob:full}.
\end{proof}

Due to the special structure of the linear systems characterizing the fixed-point iteration of Problem~\ref{prob:fixed}, we are able to guarantee that the iteration is well-defined.
\begin{lemma}[Well-posedness]
Let $\rho_h^{n-1}$, $m_h^{n-1}$, $\widetilde \rho_h$, and $\widetilde w_h$ be given and  $\rho_h^{n-1},\widetilde \rho_h>0$. Then there exists a unique solution $(\rho_h,m_h)$ of the linear system in Problem~\ref{prob:fixed}.
\end{lemma}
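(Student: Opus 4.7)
The plan is as follows. Since $Q_h\times V_h$ is finite dimensional and the equations in Problem~\ref{prob:fixed} are linear in the unknowns $(\rho_h,m_h)$, it suffices to establish uniqueness; existence then follows at once from a dimension argument. I would therefore consider the associated homogeneous system (with all right hand sides set to zero) and show that its only solution is the trivial pair $(0,0)$.

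The test-function choice is dictated by the energy structure used repeatedly in the previous sections (see the proofs of Lemma~\ref{lem:variational} and Lemma~\ref{lem:identitieshh}). I would test the first equation with $q_h = \frac{P'(\widetilde\rho_h)}{\widetilde\rho_h}\rho_h$ and the second with $v_h = m_h$. Admissibility of $q_h$ is the only technical point to verify, and it is immediate: $Q_h = P_0(T_h(\E))$ consists of piecewise constants, $\widetilde\rho_h\in Q_h$ is piecewise constant and strictly positive by hypothesis, so the coefficient $P'(\widetilde\rho_h)/\widetilde\rho_h$ is piecewise constant and its product with $\rho_h\in Q_h$ again lies in $Q_h$.

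Adding the two tested identities, two pairs of cross terms should cancel by the very design of the linearisation: the pressure couplings $\pm\bigl(\tfrac{P'(\widetilde\rho_h)}{\widetilde\rho_h}\rho_h,\dx' m_h\bigr)_\E$ contributed by the first and second equations, and the antisymmetric convective contributions $\pm\bigl(\tfrac{\widetilde m_h}{2\widetilde\rho_h^2}m_h,\dx' m_h\bigr)_\E$ internal to the second equation. What survives is
\[
\tfrac{1}{\tau}\Big(\tfrac{P'(\widetilde\rho_h)}{\widetilde\rho_h}\rho_h,\rho_h\Big)_{\!\E}+\tfrac{1}{\tau}\Big(\tfrac{1}{\rho_h^{n-1}}m_h,m_h\Big)_{\!\E}+\Big(\tfrac{a}{\widetilde\rho_h^2}\dx' m_h,\dx' m_h\Big)_{\!\E}+\Big(\tfrac{b|\widetilde m_h|}{\widetilde\rho_h^2}m_h,m_h\Big)_{\!\E}=0,
\]
a sum of manifestly non-negative quadratic forms. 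Since $\widetilde\rho_h,\rho_h^{n-1}>0$ and $P'$ is strictly positive on positive arguments, the first two terms alone are strictly coercive on $Q_h$ and $V_h$ respectively, which forces $\rho_h\equiv 0$ and $m_h\equiv 0$.

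The main obstacle is conceptual rather than computational: spotting the pair $(q_h,v_h)=\bigl(\tfrac{P'(\widetilde\rho_h)}{\widetilde\rho_h}\rho_h,\, m_h\bigr)$ that simultaneously neutralises the pressure coupling between the two equations and exposes the antisymmetry of the linearised convective term. Once this pair is in hand, everything else is routine bookkeeping. It is worth noting that no smallness assumption on the time step $\tau$ is needed, in contrast to the nonlinear setting of Lemma~\ref{lem:welposedhh}; this is consistent with the role of Problem~\ref{prob:fixed} as a practical linear solver inside every time step.
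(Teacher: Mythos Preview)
Your proposal is correct and follows essentially the same route as the paper: both test the first equation with $q_h=\tfrac{P'(\widetilde\rho_h)}{\widetilde\rho_h}\rho_h$ and the second with $v_h=m_h$, exploit the antisymmetric cancellations, and conclude from positivity of the remaining quadratic form. The only cosmetic difference is that you pass to the homogeneous system first, whereas the paper keeps the data on the right as a constant $\widetilde C$; your explicit check that $q_h\in Q_h$ (using that $Q_h$ is piecewise constant) is a small but welcome addition.
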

\begin{proof}
Note that the system for determining $(\rho_h,m_h)$ in iteration $k$ is linear and finite. 
Testing with $q_h=\frac{P'(\widetilde \rho_h)}{\widetilde \rho_h} \rho_h$ and $v_h=m_h$ and summing up the two equations leads to
\begin{align*}
\frac{1}{\tau} \left(\frac{P'(\widetilde \rho_h)}{\widetilde \rho_h} \rho_h,\rho_h \right)_{\!\!\E}
+ \frac{1}{\tau} \left(\frac{1}{\rho_h^n} m_h,m_h\right)_{\!\!\E} + \left(\frac{a}{\widetilde \rho_h^2} \dx' m_h,\dx' m_h\right)_{\!\!\E} + \left(\frac{b|\widetilde m_h|}{\widetilde \rho_h^2} m_h, m_h\right)_{\!\!\E} 
=  
\widetilde C,
\end{align*}
where $\widetilde C$ only depends on the known quantities $\rho_h^{n-1}$, $\widetilde \rho_h$, and $\widetilde m_h$.
Observe that many of the terms dropped out because of antisymmetry of the particular linearization defining the fixed-point iteration.
Since $\rho_h^{n-1}$, $\widetilde \rho_h$ and $P'(\widetilde \rho_h)$ are positive by assumption, the above identity already yields the uniqueness and hence also the existence of the solution.
\end{proof}
\begin{remark}
By a homotopy argument, one can again show that for $\tau>0$ sufficiently small, 
the solution provided by the previous lemma satisfies $\rho_h>0$. 
In order to guarantee the well-posedness for arbitrary step size $\tau$ and all iterates, one can replace the terms $\widetilde \rho_h$ in the denominators of the fixed point iteration by $\max\{\underline \rho,\widetilde \rho_h\}$ for some fixed value $\underline \rho>0$. 
If in addition $a \ge \underline{a}>0$, we actually expect positivity of $\rho_h$ and a unique fixed-point for reasonably large time step; see also Remark~\ref{rem:wellposedhh}.
For the solution of the nonlinear systems of Problem~\ref{prob:full}, 
one may alternatively also utilize a Newton-type iteration. 
\end{remark}

\section{Numerical tests} \label{sec:numerics}

We now complement our theoretical investigations with some computational results. 
For the numerical solution of all test problems, we consider the fully discrete scheme defined in Problem~\ref{prob:full}. For the solution of the nonlinear systems in every time step, we utilize the fixed point iteration given in Problem~\ref{prob:fixed}.
All computations are carried out on uniform meshes with mesh size $h$ and a fixed time step $\tau$. We report on the particular choices below.

\subsection{A shock tube problem}
As a first test problem, we consider the undamped system \eqref{eq:sys1}--\eqref{eq:sys2} with $a=b=0$ on a single pipe $e=[-5,5]$ directed from left to right. For the state equation \eqref{eq:sys3}, we use $c=1/2$ and $\gamma=2$, and as initial conditions, we choose 
\begin{align*}
\rho_0(x)=2-\text{sgn}(x) \qquad \text{and} \qquad m_0(x) = 0.
\end{align*}
This setting amounts to the dam-break test problem for the shallow water equations discussed in \cite[Example~13.4]{LeVeque02}. 
In Figure~\ref{fig:shocktube}, we depict the numerical solutions obtained with our method with mesh size $h=0.01$ and time step size $\tau=0.005$. Two fixed point iterations were used for the solution of the nonlinear problems in every time step.

\begin{figure}[ht!]
\begin{center}
\includegraphics[width=0.41\textwidth]{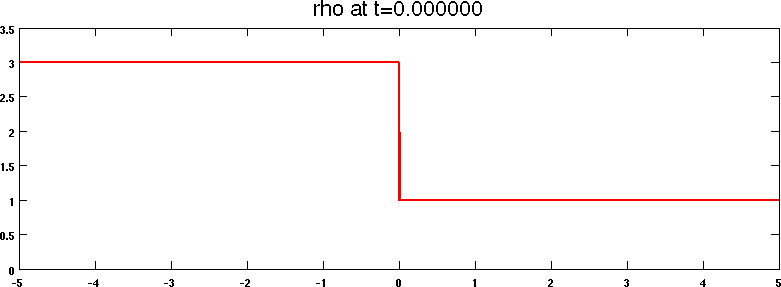}
\hspace*{1em}
\includegraphics[width=0.41\textwidth]{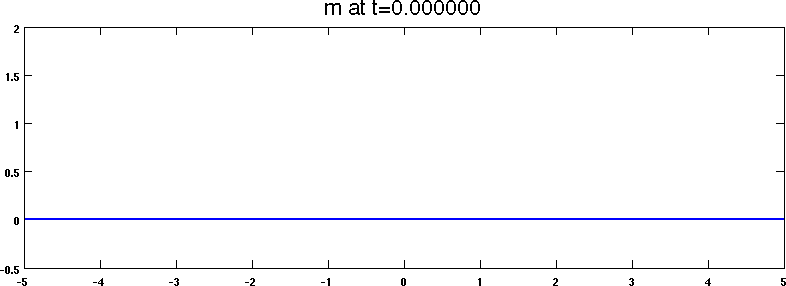} \\[3ex]
\includegraphics[width=0.41\textwidth]{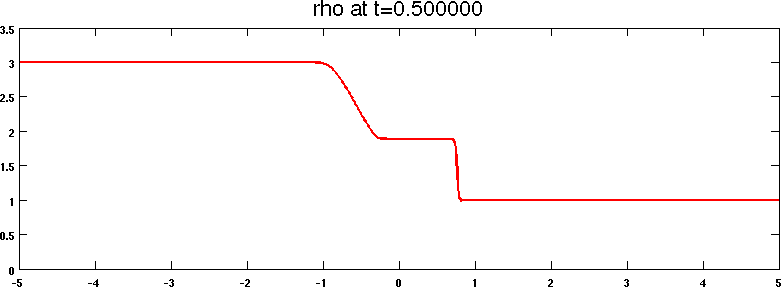}
\hspace*{1em}
\includegraphics[width=0.41\textwidth]{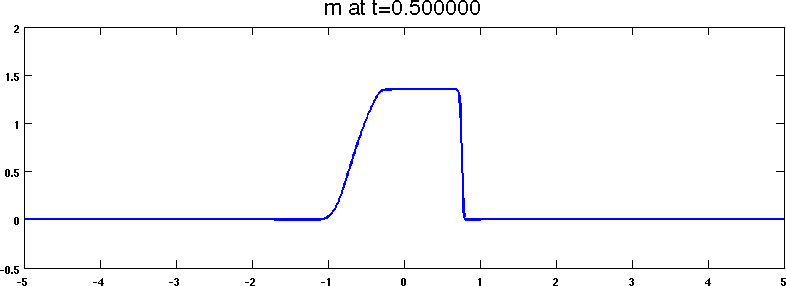} \\[3ex]
\includegraphics[width=0.41\textwidth]{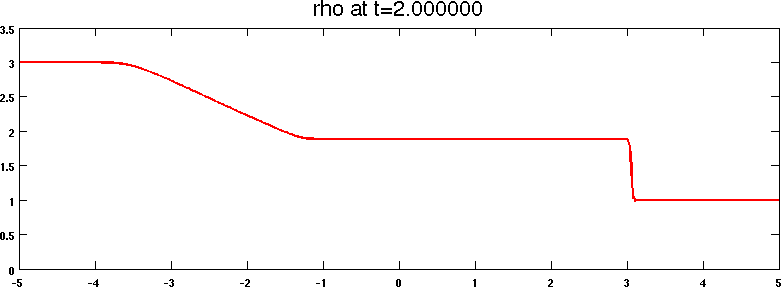}
\hspace*{1em}
\includegraphics[width=0.41\textwidth]{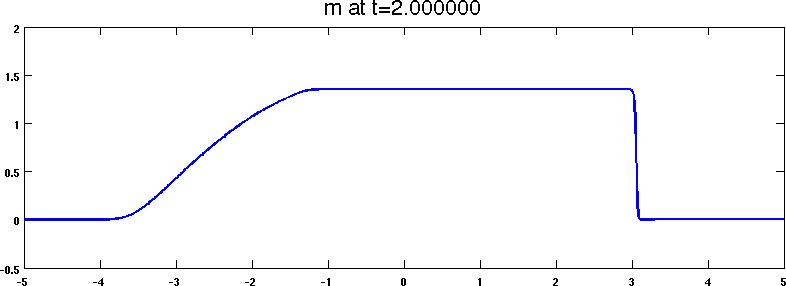}
\end{center}
\caption{Solution for the shock tube problem: A shock wave is propagating to the right and a rarefaction wave to the left. The results are in very good agreement with the ones presented in \cite[Fig.~13.4]{LeVeque02}\label{fig:shocktube}.}
\end{figure}

The solution to the problem here consists of a shock wave propagating to the right and a rarefaction wave propagating to the left. The approximations obtained with our numerical method are in very good agreement with the ones reported in \cite[Fig.~13.4]{LeVeque02}. 
Some slight smoothing of the discontinuity and the kinks can be observed due to the dissipative nature of the implicit time stepping scheme mentioned after Lemma~\ref{lem:identitieshh}. The effect of smoothing slightly increases when the time step $\tau$ is further enlarged. 
The total mass of the system is conserved exactly while the total energy is slightly decreasing over time due to numerical dissipation. For the numerical solution depicted in Figure~\ref{fig:shocktube}, 
the energy at the final time is $E(2.0)=0.983 \times E_0$, which amounts to an energy loss of about 1.7\% due to numerical dissipation. The loss of energy could be further reduced by decreasing the time step size.

\subsection{Convergence towards steady states in the presence of friction}

As a second test scenario, we consider the flow of gas in a single pipe $e=[-5,5]$.
As a model for the propagation, we here use the system \eqref{eq:sys1}--\eqref{eq:sys2} with $a=0$ and $b=100$, which amounts to the inviscid case with large damping typically observed in gas pipelines \cite{BrouwerGasserHerty11,Osiadacz84}. 
The parameters in the pressure law are again set to $c=1/2$ and $\gamma=2$. 
As initial conditions, we here use 
\begin{align*}
\rho_0=11 \qquad \text{and} \qquad m_0=0. 
\end{align*}
The value for the initial density $\rho_0$ is chosen large enough in order to guarantee that the density stays positive for all $t \to \infty$. As boundary conditions, we consider
\begin{align*}
m(-5,t)=m(5,t)=1 \qquad \text{for } t > 0.
\end{align*}
This means that we start to inject gas at time $t=0$ with a constant rate at the left end of the pipe and we extract the same amount of gas on the right end. 
The total mass of the system is therefore conserved for all time.
In Figure~\ref{fig:gas}, we depict the numerical solutions obtained with our method with mesh size $h=0.01$ and time step size $\tau=0.005$. 
Two fixed point iterations are used again for the solution of the nonlinear problems in every time step. 
\begin{figure}[ht!]
\begin{center}
\includegraphics[width=0.41\textwidth]{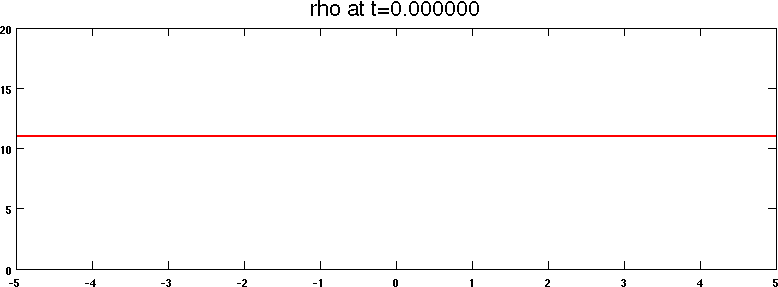}
\hspace*{1em}
\includegraphics[width=0.41\textwidth]{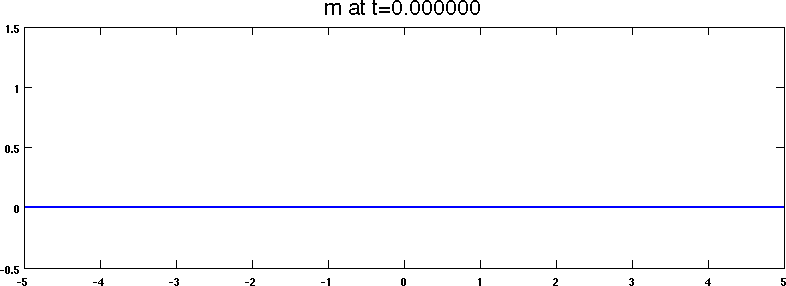} \\[2ex]
\includegraphics[width=0.41\textwidth]{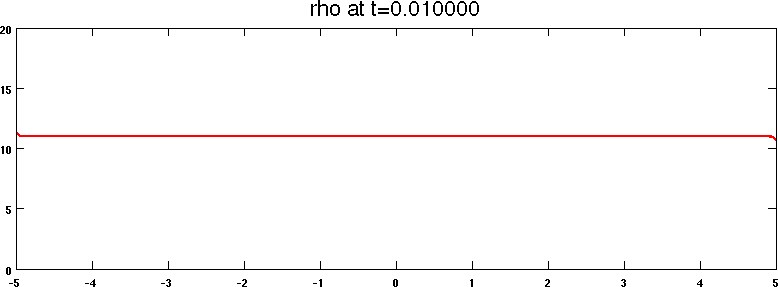}
\hspace*{1em}
\includegraphics[width=0.41\textwidth]{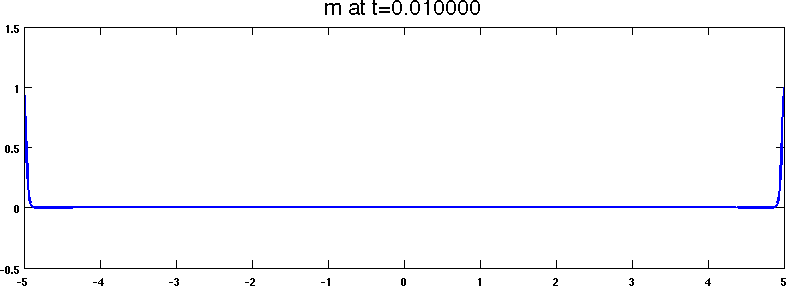} \\[2ex]
\includegraphics[width=0.41\textwidth]{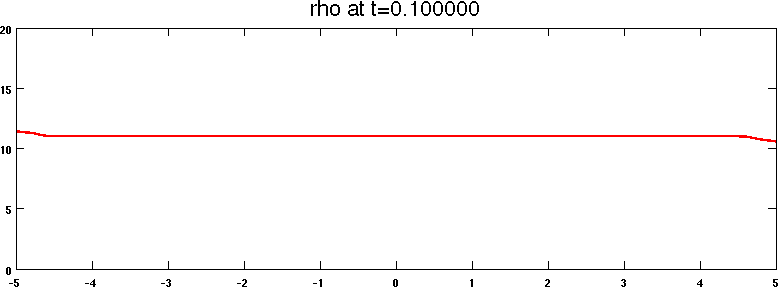}
\hspace*{1em}
\includegraphics[width=0.41\textwidth]{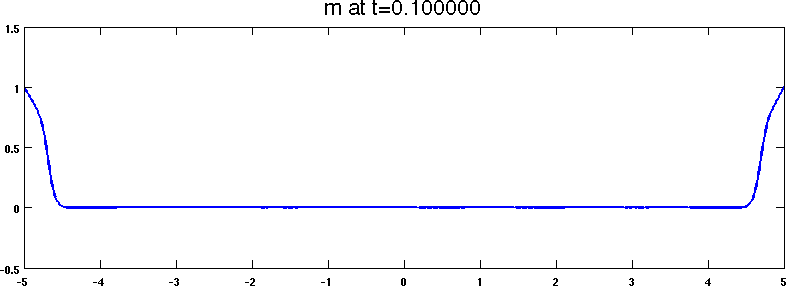} \\[2ex]
\includegraphics[width=0.41\textwidth]{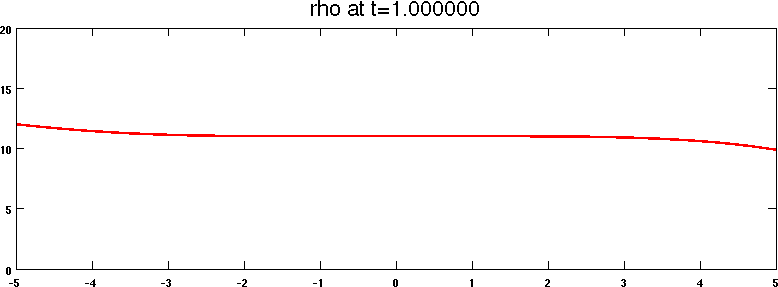}
\hspace*{1em}
\includegraphics[width=0.41\textwidth]{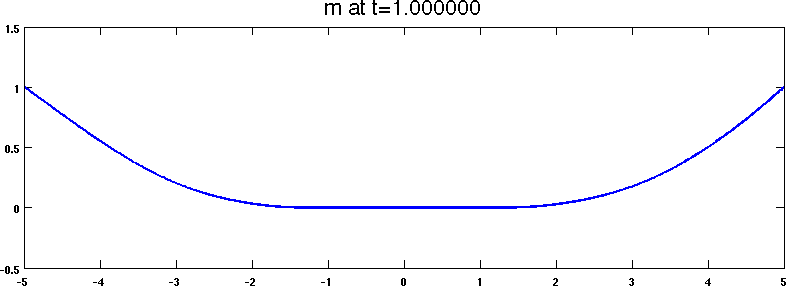} \\[2ex]
\includegraphics[width=0.41\textwidth]{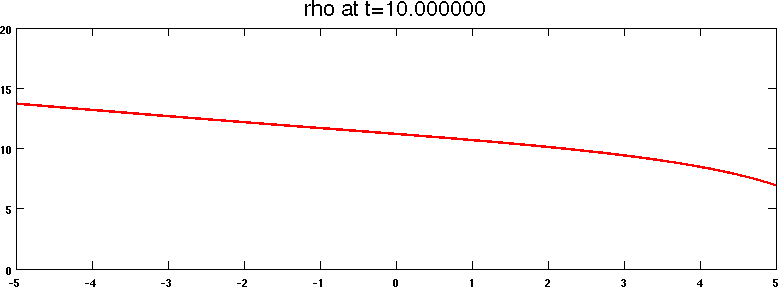}
\hspace*{1em}
\includegraphics[width=0.41\textwidth]{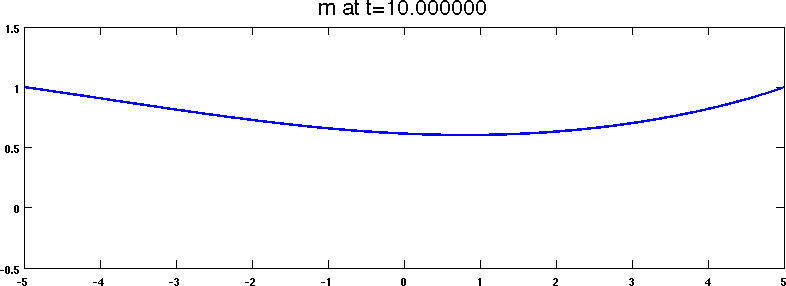} \\[2ex]
\includegraphics[width=0.41\textwidth]{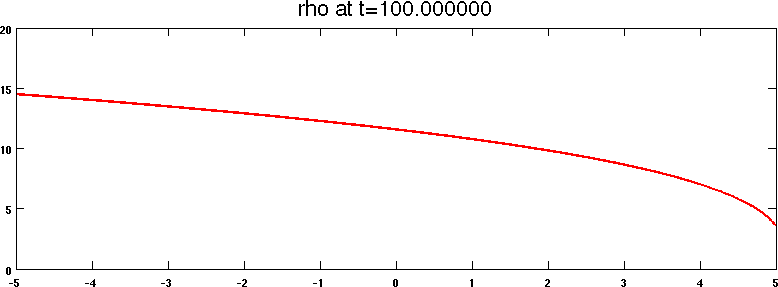}
\hspace*{1em}
\includegraphics[width=0.41\textwidth]{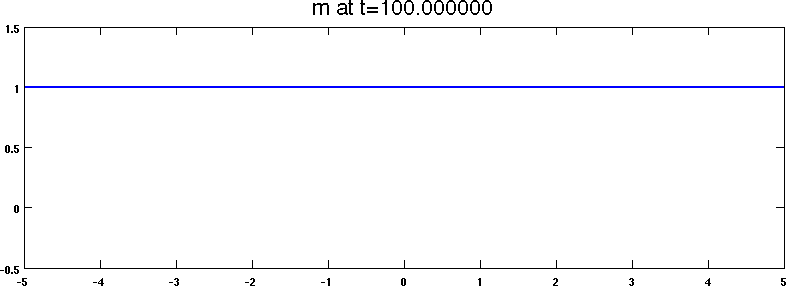}
\end{center}
\caption{Numerical solutions for the gas transport problem: Due to the strong damping, the discontinuities stemming from sudden change of boundary conditions decay rapidly and the system approaches steady state.\label{fig:gas}}
\end{figure}
Due to the sudden change of the boundary conditions, discontinuities are generated at the pipe ends at time $t=0$, but they rapidly decay as an effect of the strong damping. Another consequence of the damping is that the system converges quickly towards a steady state which is the typical situation observed in gas pipelines.  
For the test problem under consideration, the stationary solution is governed by
the system
\begin{align*}
 \bar m &= 1, \\
 \dx \left( \frac{1}{\bar \rho} + \frac{1}{2} \bar \rho^2 \right) &= 100 \frac{1}{\rho} \qquad \text{and} \qquad \int_{-5}^5 \bar \rho dx = 110,
\end{align*}
where we already simplified the momentum equation using the condition $\bar m=1$ and the specific form of the pressure law. 
The last condition comes from the fact that the total mass is conserved. 
The problem of determining $\bar \rho$ using the equations of the second line can be solved numerically by a shooting method.
The approximation for the exact stationary solution obtained in this manner is shown in Figure~\ref{fig:gasstat}. 
\begin{figure}[ht!]
\begin{center}
\includegraphics[width=0.41\textwidth]{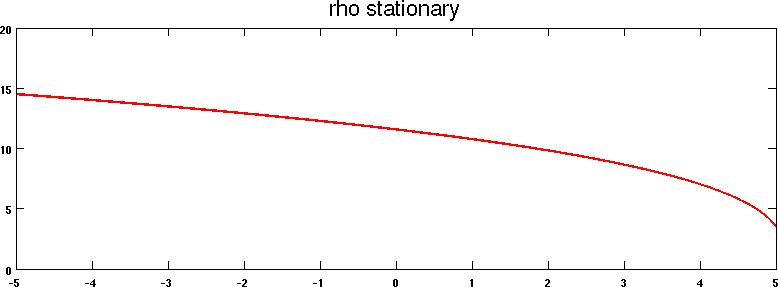}
\hspace*{1em}
\includegraphics[width=0.41\textwidth]{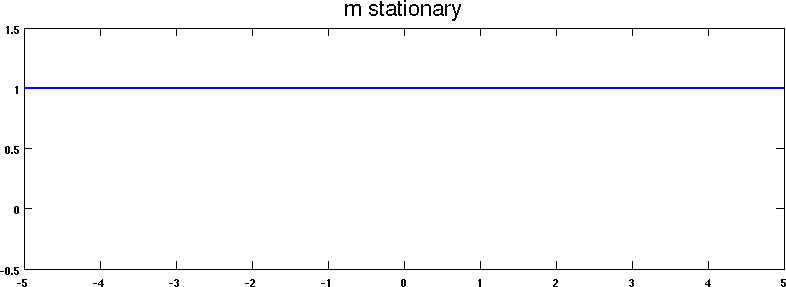} 
\end{center}
\caption{Stationary solution of the gas transport problem computed by the method of Heun and bisection to determine the initial value $\bar \rho(-5)$.\label{fig:gasstat}}
\end{figure}
An inspection of the plots reveals the convergence of the numerical solution towards the correct steady state. Note that the exact conservation of mass on the discrete level is crucial here to get the correct steady state.

\subsection{Gas flow across a junction}

As a last test case, we consider the flow of gas through the simple pipe network depicted in Figure~\ref{fig:graph}. All pipes are chosen to be of unit length. The model parameters are set to $a=0$ and $b=100$, and we take $c=1/2$ and $\gamma=2$ in the pressure law as before. The initial conditions are now chosen as 
\begin{align*}
m_0=0 \qquad \text{and} \qquad \rho_{0,e_1}=5, \ \rho_{0,e_2}=3, \ \rho_{0,e_3}=1.
\end{align*}
These initial values correspond to a specific Riemann problem at the junction \cite{Reigstad15}. 
We further assume that the ports of the network are closed, such that 
\begin{align*}
m(v,t)=0 \qquad \text{for all } v \in \Vb, \ t>0.
\end{align*}
Due to the dissipation of energy caused by friction, we again expect convergence to a
steady flow. As can be verified easily, the stationary solution is here given by
\begin{align*} 
\bar m = 0 \qquad \text{and} \qquad \bar \rho = 3.
\end{align*}
The numerical solutions obtain in our experiments are depicted in Figure~\ref{fig:network}.
\begin{figure}[ht!]
\begin{center}
\includegraphics[width=0.41\textwidth]{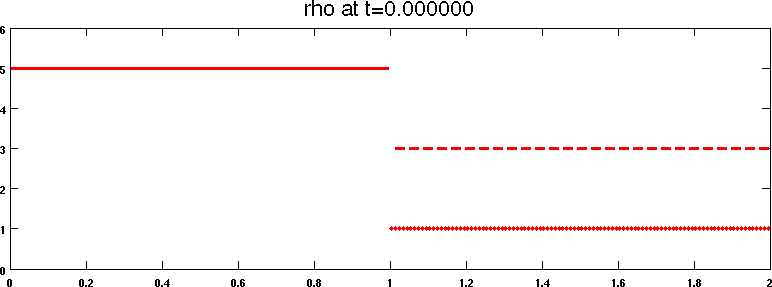}
\hspace*{1em}
\includegraphics[width=0.41\textwidth]{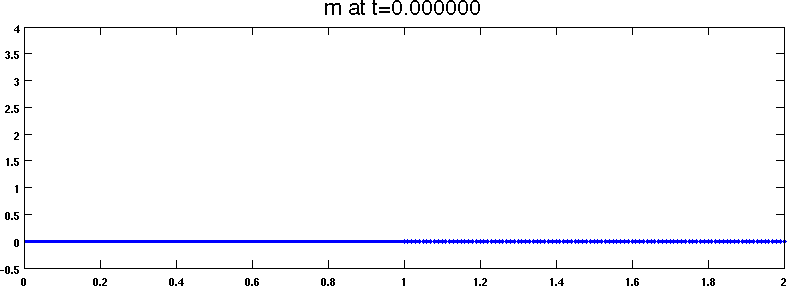} \\[3ex]
\includegraphics[width=0.41\textwidth]{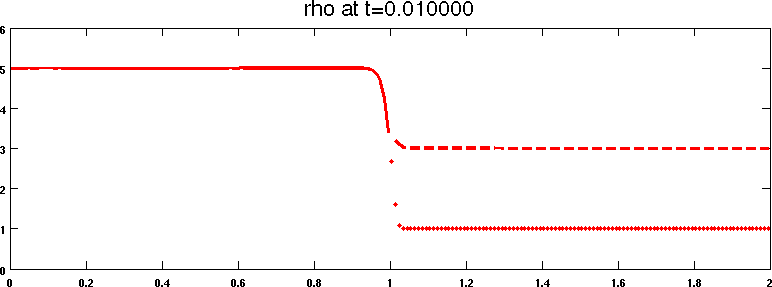}
\hspace*{1em}
\includegraphics[width=0.41\textwidth]{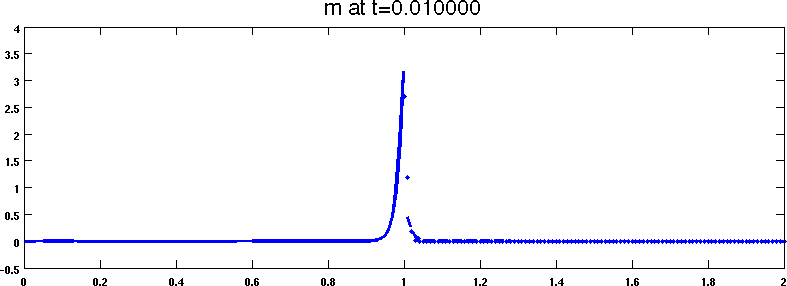} \\[3ex]
\includegraphics[width=0.41\textwidth]{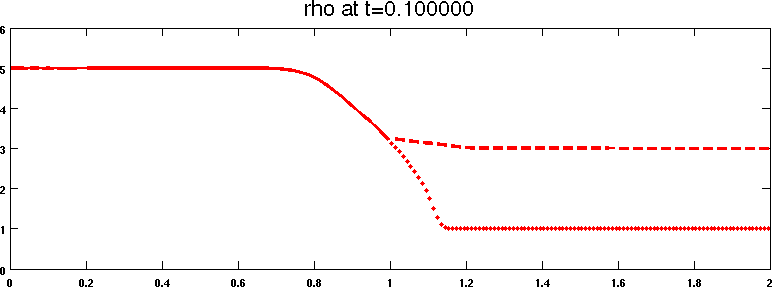}
\hspace*{1em}
\includegraphics[width=0.41\textwidth]{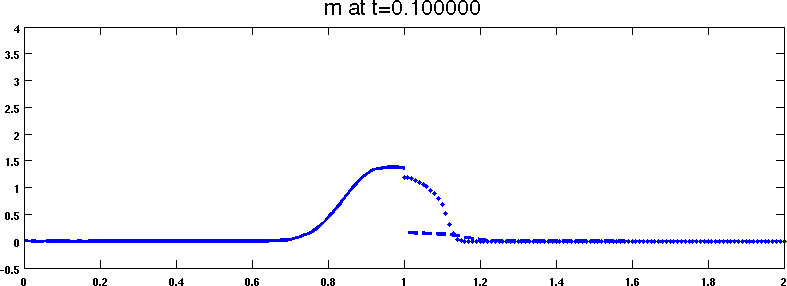} \\[3ex]
\includegraphics[width=0.41\textwidth]{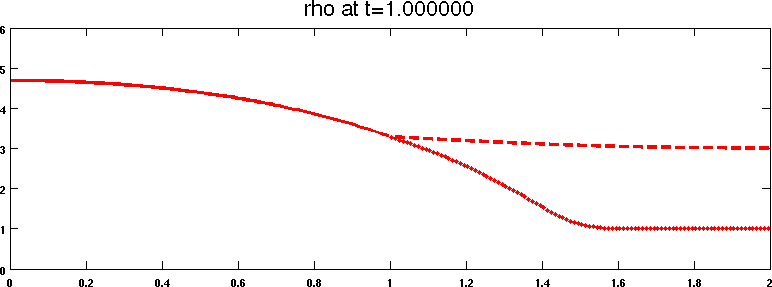}
\hspace*{1em}
\includegraphics[width=0.41\textwidth]{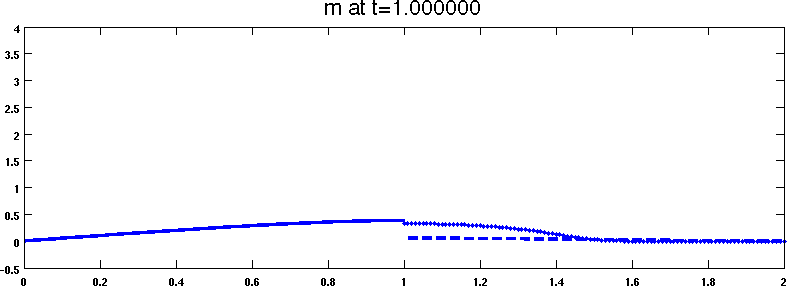} \\[3ex]
\includegraphics[width=0.41\textwidth]{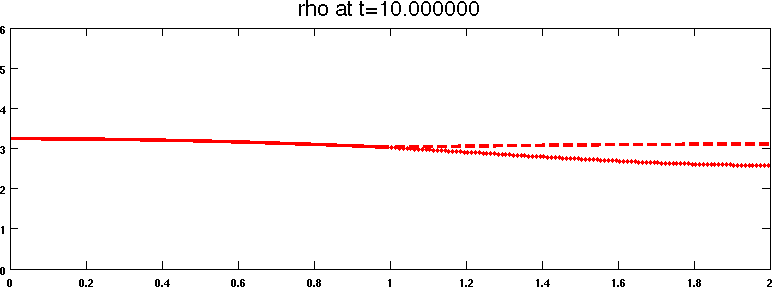}
\hspace*{1em}
\includegraphics[width=0.41\textwidth]{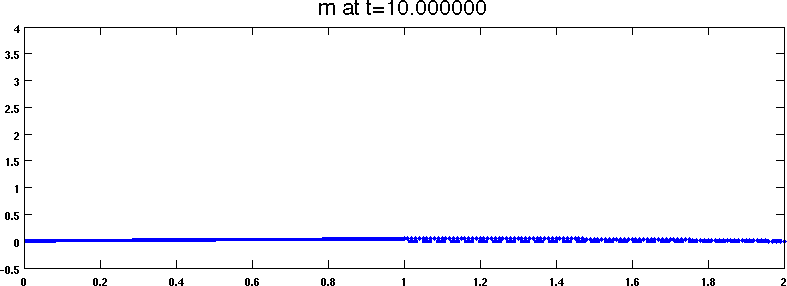} \\[3ex]
\includegraphics[width=0.41\textwidth]{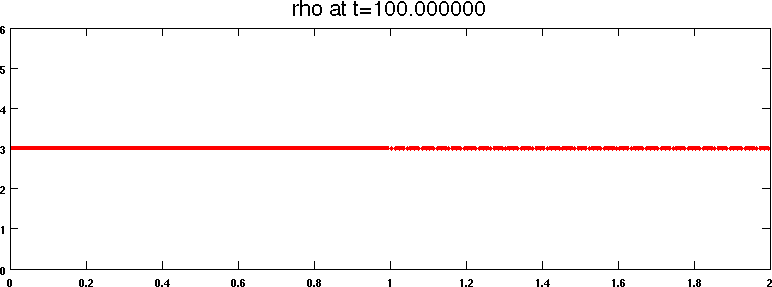}
\hspace*{1em}
\includegraphics[width=0.41\textwidth]{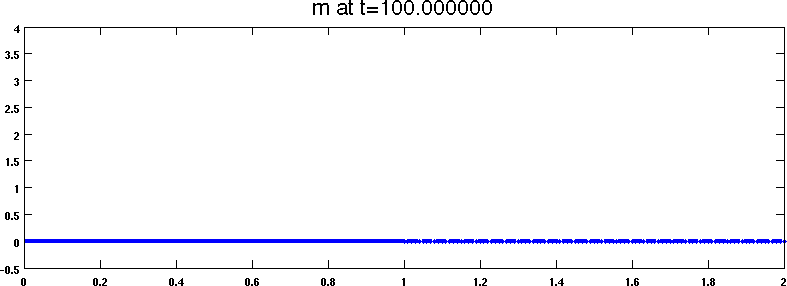}
\end{center}
\caption{Numerical solutions for the gas transport on the network depicted in Figure~\ref{fig:graph}. The x-axis displays the path length along the flow and the junction is located at $x=1$. The different solutions for pipe $1$ to $3$ are depicted with straight, dashed, and dotted lines, respectively.\label{fig:network}}
\end{figure}
The initial value for the density is discontinuous across the junction, but this discontinuity is smeared out quickly due to the damping, and the system approaches a steady state on the long run. The fluxes are discontinuous across the junction for all $t>0$ but as a result of the coupling condition \eqref{eq:sys4}, which is satisfied exactly at every point in time, they always sum up to zero.

\subsection{Further remarks on computational tests}

For the problems under investigation, the numerical diffusion resulting from the implicit time stepping scheme was sufficiently large to yield unique global solutions in all our computations. 
The stability of the scheme was not affected when refining the mesh and convergence was observed for all test cases. 
Some additional artificial viscosity can be added to further regularize the numerical solutions. In fact, very similar results were obtained for all test cases, when the viscosity parameter $a$ was chosen positive and sufficiently small.
Numerical computations were carried out also for other model and discretization parameters. In these tests we observed a very robust behavior of the scheme and the expected convergence with mesh and time step going to zero. 
Our method and analysis is applicable to general finite networks. Computations for more complicated network topologies lead to very similar results and these were therefore not presented here. 

\section{Discussion} \label{sec:discussion}

In this paper, we proposed a novel variational characterization of solutions to compressible flow problems on networks that allowed us to establish conservation of mass and energy in a very direct manner and to apply conforming Galerkin approximations for the discretization in space.
As a particular space discretization, we considered a mixed finite element method.  We proved the well-posedness of the resulting semi-discrete scheme and established the conservation of mass and energy.
We additionally proposed an implicit time stepping leading to a fully discrete scheme that exactly conserves mass and slightly dissipates energy as an effect of numerical dissipation. Well-posedness of the fully discrete scheme was established and a fixed-point iteration for the solution of the nonlinear problems arising in every time step was proposed.
The stability, accuracy, and conservation properties of our numerical method was  demonstrated for some numerical test problems. 
The method proved to be extremely robust concerning the choice of model and discretization parameters, which can be explained by the inherent energy stability of the approximation scheme. 

The energy estimates provided in this paper may serve as a first step towards a complete convergence analysis for the method. A more precise characterization of the numerical dissipation provided by the implicit time stepping scheme may lead to sharper statements about well-posedness of the fully discrete scheme. The extension of our arguments to higher order approximations and more general cases including multi-dimensional problems might be interesting and seems feasible. These topics are left for future research.

\section*{Acknowledgments}
The author would like to thank for support by the German Research Foundation (DFG) via grants IRTG~1529 and TRR~154,
and by the ``Excellence Initiative'' of the German Federal and State Governments via the Graduate School of Computational Engineering GSC~233 at Technische Universität Darmstadt.


\end{document}